\newtheorem{thm}[subsection]{Theorem}
\newtheorem{defn}[subsection]{Definition}
\newtheorem{claim}[subsection]{Claim}
\newtheorem{corr}[subsection]{Corollary}
\newtheorem{remark}{Remark}
\theoremstyle{definition}
\newtheorem{example}[subsection]{Example}
\newcommand{\R}{\mathbb R}
\newcommand{\Z}{\mathbb Z}
\newcommand{\C}{\mathbb C}
\newcommand{\T}{\mathbb T}
\newcommand{\lt}{{\mathcal{T}}}
\DeclareMathOperator{\hocolim}{hocolim}
\DeclareMathOperator{\Map}{Map}
\DeclareMathOperator{\Rep}{Rep}
\DeclareMathOperator{\Tr}{T}
\DeclareMathOperator{\W}{W}
\DeclareMathOperator{\K}{K}
\DeclareMathOperator{\Hg}{H}
\DeclareMathOperator{\Hbg}{\tilde{H}}
\DeclareMathOperator{\FT}{N({\bf T})}
\DeclareMathOperator{\FTI}{N_I({\bf T})}
\DeclareMathOperator{\BU}{BU}
\DeclareMathOperator{\TMF}{TMF}
\DeclareMathOperator{\LM}{LM}
\DeclareMathOperator{\LG}{LG}
\DeclareMathOperator{\LGT}{L(G/T)}
\DeclareMathOperator{\LT}{LT}
\DeclareMathOperator{\LbT}{\tilde{L}T}
\DeclareMathOperator{\LbG}{\tilde{L}G}
\DeclareMathOperator{\M}{M}
\DeclareMathOperator{\SU}{SU}
\DeclareMathOperator{\GT}{G/T}
\DeclareMathOperator{\Lgt}{L(G/T)}
\DeclareMathOperator{\GHgI}{\Gr/\Hg_I}
\DeclareMathOperator{\MT}{M/T}
\DeclareMathOperator{\Gr}{G}
\DeclareMathOperator{\No}{N}
\DeclareMathOperator{\Lo}{L}
\newfont{\german}{eufm10}
 \DeclareMathOperator*{\invlim}{\varprojlim}
\newcommand\qu{/\kern-.7ex/}
\begin{document}
\pagestyle{plain}

\title
{Quantization of the Modular functor and Equivariant Elliptic cohomology}
\author{Nitu Kitchloo}
\address{Department of Mathematics, Johns Hopkins University, Baltimore, USA}
\email{nitu@math.jhu.edu}
\thanks{Nitu Kitchloo is supported in part by the Simons Fellowship.}

\date{\today}

%\keywords{ }

{\abstract

\noindent
Given a simple, simply connected compact Lie group $\Gr$, let $\M$ be a $\Gr$-space. We describe the quantization of the category of positive energy representations of the loop group of $\Gr$ at a given level and parametrized over $\LM$. This procedure is described in terms of dominant $\K$-theory of the loop group evaluated on the phase space given by the tangent bundle of basic gauge fields about a circle (parametrized over $\LM$ and with gauge symmetries $\LG$). As such, our construction gives rise to a {\em categorical} BV-BRST type quantization for families of rational 2d CFTs with gauge symmetries parametrized over $\M$. More concretely, we construct a holomorphic sheaf over a universal elliptic curve with values in dominant $\K$-theory of the loop space $\LM$, and show that each stalk of this sheaf is a cohomological functor of $\M$. We also interpret this theory as a model of equivariant elliptic cohomology of $\M$ as constructed by Grojnowski and others.}
\maketitle

\tableofcontents

\section{Introduction:}

\medskip
\noindent
Given a string manifold $\M$, Witten has used heuristic arguments to show that the genus one partition function of a certain 2-dimensional sigma model (with background $\M$) can be seen as the value of a genus evaluated on $\M$. In particular, this ``Witten genus of $\M$" takes values in modular forms (see \cite{S3} for an overview). This led to a flurry of activity aimed at constructing the underlying ``Elliptic cohomology theory" with coefficients being modular forms, and which is a receptacle for the Witten genus. Subsequent work by M. Hopkins and his collaborators resulted in the construction of the theory $``\TMF"$ (Topological Modular Forms). This theory has been shown to admit all the homotopical properties one would expect of an ``Elliptic Cohomology theory" (indeed, it is a universal elliptic cohomology theory in a suitable sense). However, a geometric description of $\TMF$ that allows one to draw a connection to physics remains elusive. 

\medskip
\noindent
Given the chiral sector of a two dimensional conformal field theory, one expects the genus one partition function to be the character of a representation of an underlying ``Chiral Algebra" (or Vertex Algebra). The category of representations of the chiral algebra is typically the linear category that one assigns to a circle in the process of constructing the underlying ``Modular Functor" \cite{S}. If the field theory in question is reasonably nice (i.e. rational), then this category of representations is semi-simple. This suggests, in particular, that the $\K$-theory of this category is essentially the same as topological complex $\K$-theory (possibly twisted by an anomaly).

\medskip
\noindent
Now consider two dimensional field theories parametrized over a manifold $\M$. The basic fields in dimension one are parametrized over the space $\LM$ of smooth maps from a circle to $\M$. The space $\LM$ comes with a manifest action of the rotation group $\T$. Motivated by the observation made in the previous paragraph, in \cite{KM} J. Morava and the author considered the completion of the $\T$-equivariant $\K$-theory of $\M$ at the rotation character $q$: $\K_{\T}(\LM)((q))$. One may interpret this completion as localizing around infinitesimal loops (or the low-energy limit). A simple argument was used in \cite{KM} to show that this is a cohomology theory in $\M$ (i.e. satisfies the Mayer-Vietoris axioms on $\M$) and can be interpreted as an approximation to elliptic cohomology of $\M$ at the ``Tate locus". In particular, $\K_{\T}(\LM)((q))$ was shown to be a receptacle for the Witten genus.   

\medskip
\noindent
Equivariantly, the situation is much more interesting and gives further support to the idea of relating the $\K$-theory of the chiral algebra to the topological $\K$-theory of the space of fields on a circle. Consider two dimensional field theories with local gauge symmetries for an underlying simple and simply connected Lie group $\Gr$. The space of fields over a circle for such theories supports an action of the loop group $\LG$. In addition, the basic gauge fields for such theories is the $\LG$-space $\mathcal{A}$ of principal connections on the trivial $\Gr$-bundle over a circle. Since the quantum state-space of a two dimensional field theory has a discrete positive (or negative) energy spectrum, one expects the state-space to represent an element in equivariant $\K$-theory $\K_{\LG}(\mathcal{A})$ - if the latter can be rigorously defined. Freed-Hopkins-Teleman have shown \cite{FHT} that once we incorcorporate a twisting on $\K_{\LG}(\mathcal{A})$ induced by the level (i.e. the central character of the universal central extension $\LbG$ of $\LG$), this $\K$-theory is indeed well defined, and becomes canonically isomorphic to Grothendieck group of positive energy representations of $\LG$.

\medskip
\noindent
In this document, we would like to offer the suggestion that studying parametrized 2d conformal field theories that admit gauge symmetries via the $\K$-theory (of the category of representations of the underlying Chiral algebra), is a richer object of study. In particular, one would like to evaluate this $\K$-theory on the stack of {\em classical} fields in codimension one (or fields that satisfy equations of motion in a neighborhood of a codimension one manifold), and interpret it as a ``BV-BRST type quantization" of the modular functor underlying these field theories. More discussion on this philosophy can be found at \cite{NC}. Let us justify this general idea with details in a concrete setting that is relevant to this article. 

\medskip
\noindent
Let us begin by assuming that one may define the $\LG$-equivariant $\K$-theory for spaces with a proper action of the group $\T \ltimes \LbG$, where $\LbG$ denotes the universal central extension of the loop group with a compatible lift of the rotation group $\T$. We apply this $\K$-theory to the ``phase space" for 2d conformal field theories with gauge symmetry, parametrized over a $\Gr$-space $\M$. This space is defined as the stack $(\LM \times \mathcal{A}_\C)$, where $\mathcal{A}_\C$ denotes the complexification of the stack $\mathcal{A}\qu (\T \ltimes \LbG)$ and should be understood as the tangent bundle of the fields on a circle\footnote{The tangent vectors are known as conjugate-momenta}. As such, the phase space encodes the {\em classical solutions} to any second order equations of motion on the space of connections on the germ of a cylinder, supporting the action of the Gauge group (see remark \ref{complexification}). Extrapolating from the case of the trivial group $\Gr$ considered in \cite{KM}, we can only expect to get an equivariant cohomology theory in $\M$ from this data once we suitably complete with respect to the character $q$ that represents energy. In this document, we describe how one may successfully do so for simple and simply-connected compact Lie groups $\Gr$ and interpret it as an approximation of elliptic cohomology of the parameter space\footnote{Much of this program should carry through with minor alterations for arbitrary compact connected Lie groups}.  

\medskip
\noindent
To begin the program as described above, one must first construct a $\T \ltimes \LbG$-equivariant version of $\K$-theory for any positive integral level. One can interpret this as a ``{\em Categorical Gauging}" mechanism. This is precisely what has been constructed by the author in \cite{Ki}, and goes by the name of Dominant K-theory which is reviewed in Section \ref{back}. The next step (which is the heart of this article) is described in Section \ref{locality} and can be interpreted as a ``{\em Categorical BV-BRST type quantization}" procedure, which constructs, for any positive level $k$, a global version of dominant $ \K$-theory: $ {}^k \mathcal{K}_{\T \ltimes \LbG}(\LM)$. In other words, we describe ${}^k \mathcal{K}_{\T \ltimes \LbG}(\LM)$ as an equivariant holomorphic sheaf over the stack $\{ (i \R_+ \times \mathcal{A} )\qu \T \ltimes \LbG \}_\C$ built out of dominant $ \K$-groups of the space $\LM$ (see  theorem \ref{dom}). Here $i \R_+$ denotes the positive energy axis, and its complexification can be identified with the upper half plane $\mathfrak{h}$ which admits an interpretation as a moduli space of parametrized annuli (see remark \ref{complexification}). As such, the phase space $\{ (i \R_+ \times \mathcal{A} )\qu \T \ltimes \LbG \}_\C$ may also be interpreted as a moduli space of $\Gr$-bundles parametrized over the moduli space $\mathfrak{h}$ of annuli. In this context, our sheaf ${}^k \mathcal{K}_{\T \ltimes \LbG}(\LM)$ has a description in terms of the modular functor (in genus-zero and with two insertions) for loop group representations of a fixed level (see remark \ref{cb}). If $\M$ is a finite $\Gr$-space (i.e. $\M$ is equivalent to a finite $\Gr$-CW complex), we show that the stalks of this sheaf are cohomological functors of $\M$. In Section \ref{grojnowski}, we take invariants with respect to certain gauge subgroups, so that ${}^k \mathcal{K}_{\T \ltimes \LbG}(\LM)$ descends to a sheaf ${}^k \mathcal{G}(\M)$ on a universal elliptic curve. 
By taking $\M$ to be a point and evaluating invariant global sections of the above sheaf, we identify the coefficients of this theory with Weyl invariant theta functions, or equivalently, with level $k$ positive energy representations of the loop group $\T \ltimes \LbG$ (see corollary \ref{rep LG} and remark \ref{geometric} ). In Section \ref{LGT} we take $\M$ to be the full flag variety $\GT$ and recover all theta functions, or equivalently, the level $k$ representations of $\T \ltimes \LbT$ (see theorem \ref{rep LT}). The question of modulariy is taken up in Section \ref{modularity}. In particular, for $\Gr$-spaces $\M$ that satisfy some well-known conditions, the space of invariant sections of these sheaves is shown to be a representation of the modular group $\mbox{SL}_2(\Z)$ (see theorem \ref{modular}). 

\medskip
\noindent
As mentioned earlier, our construction resembles the $\Gr$-equivariant elliptic chomology of $\M$ as constructed by Grojnowski \cite{G}, and subsequently explored in more detail by Ando and others (see \cite{AB}). These constructions identify equivariant elliptic cohomology as a twisted sheaf of algebras over the universal elliptic curve\footnote{Lurie also has an algebraic interpretation of this sheaf (see Sections 3 and 5 in \cite{L})}. We expect ${}^k \mathcal{G}(\M)$ to be closely related (if not isomorphic) to Grojnowski's equivariant elliptic cohomology (see remark \ref{main1} and Section \ref{remarks}).

\medskip
\noindent
The author wishes to thank the American Institute of Math., and all the participants of the AIM (SQUARE) workshop in mathematical physics (in particular M. Ando, H. Sati and J. Morava) for inspiring the ideas that led to this paper. We would also like to thank Owen Gwilliam for helpful discussions on the BV-BRST formalism. 

\medskip
\noindent
One word about our conventions: Throughout this article, we deal with several actions (both left and right) as well as extensions of groups. In order to avoid confusion, we will use the notation $g \ast x$ to mean that a group element $g$ acts on an element $x$. The notation $g \, h$ is reserved for a product of two group elements $g$ and $h$ inside a larger group. 

\section{Background on Dominant $ \K$-theory and the space $\mathcal{A}$:}  \label{back}

\medskip
\noindent
Let $\Gr$ be a simple and simply-connected compact Lie group of rank $n$. Let $\LG$ denote 
the loop group of $\Gr$. The group $\LG$ supports a universal central extension \cite{PS} which will henceforth be denoted by $\LbG$. The action of the rotation group $\T$ on $\LG$ lifts to an action on $\LbG$, so that one may define the extended loop group: $\T \ltimes \LbG$. 

\medskip
\noindent
Given a representation of $\T \ltimes \LbG$ in a separable Hilbert space $\mathcal{H}$, we say it has level $k$, if the central circle $S^1$ acts by the character $e^{ik\theta}$. It is well known that for positive $k$, the category of representations of level $k$ is semi simple, with finitely many irreducible objects. In addition, one may prescribe an orientation on the circle $\T$ so that any irreducible representation of level $k >0 $, has finitely may negative Fourier modes (i.e negative powers of the character $q$). Such representations are called positive energy representations. 

\medskip
\noindent
As explained in \cite{Ki}, we fix a level $k>0$ and consider a level $k$ representation of $\T \ltimes \LbG$ in a separable Hilbert space $\mathcal{H}_k$ with the property that any level $k$ irreducible representation occurs infinitely often in $\mathcal{H}_k$. Let $\mathcal{F}_k$ denote the space of Fredholm operators on $\mathcal{H}_k$. One may choose a topology on $\mathcal{F}_k$ so that its underlying homotopy type is $\Z \times \BU$ and admits a continuous action of $\T \ltimes \LbG$. In \cite{Ki} we constructed a two periodic cohomology theory, called Dominant $\K$-theory on the category of proper $(\T \ltimes \LG)$-CW complexes:\footnote{Note that the center acts trivially on these spaces.} 
\[ {}^k\K^{0}_{\T \ltimes \LbG}(X) := \pi_0 \Map^{\T \ltimes \LbG} \{ X, \mathcal{F}_k \}, \quad \quad {}^k \K^{-1}_{\T \ltimes \LbG}(X) := \pi_1 \Map^{\T \ltimes \LbG} \{ X, \mathcal{F}_k \}, \]
where $\Map^{\T \ltimes \LbG} \{ X, \mathcal{F}_k \}$ denotes the space of continuous $(\T \ltimes \LbG)$-equivariant maps from $X$ to $\mathcal{F}_k$. 

\medskip
\begin{defn}
Having defined Dominant $\K$-theory, let us set some notation. Given a closed subgroup $\Hg \subset \T \ltimes \LG$, let $\Hbg$ be the induced central extension. Given a proper $\Hg$-space $Y$, we define ${}^k \K^{-\ast}_{\Hbg}(Y)$ by:
\[ {}^k \K^{-\ast}_{\Hbg}(Y) := {}^k \K^{-\ast}_{\T \times \LbG}((\T \ltimes \LbG)_+ \wedge_{\Hbg} Y) = \pi_\ast \Map^{\T \ltimes \LbG} \{ (\T \ltimes \LbG)_+ \wedge_{\Hbg} Y, \mathcal{F}_k \} = \pi_\ast \Map^{\Hbg} \{ Y, \mathcal{F}_k \}. \]
\end{defn}

\medskip
\noindent
We now describe the structure of the space $\T \ltimes \LbG$-space $\mathcal{A}$ of principal connections on the trivial $\Gr$-bundle over the circle. Indeed, this space is homeomorphic to a proper, finite $\T \ltimes \LbG$-complex. In addition, it is the universal space for proper actions, in that any other proper $\T \ltimes \LbG$-space maps to it along an equivariant map that is unique up to an equivariant homotopy.

\medskip
\noindent  
Fix a maximal torus $\Tr$ of $\Gr$, and let $\alpha_i$, $1 \leq i \leq n$ be a 
set of simple roots. We let $\alpha_0$ denote the highest root. Each root 
$\alpha_i$, $0 \leq i \leq n$ determines a compact subgroup $\Gr_i$ of $\Gr$. 
More explicitly, $\Gr_i$ is the semi simple factor in the centralizer of the 
codimension one subtorus given by the kernel of $\alpha_i$. Each $\Gr_i$ may 
be canonically identified with $\SU(2)$ via an injective map $\varphi_i : \SU(2) \longrightarrow \Gr$. 
We use these groups $\Gr_i$ to define corresponding compact subgroups $\Gr_i$ 
of $\LG$ as follows:
\[ 
\Gr_i = \{ z \mapsto \varphi_i\left(\begin{matrix} a & b \\ c & d 
\end{matrix}\right)  \quad \quad \quad \mbox{if} \quad  \left( 
\begin{matrix} a & b \\ c & d \end{matrix}\right) \in \SU(2) \quad i>0,  \} \]
\[ \Gr_0 = \{ z \mapsto \varphi_0\left(\begin{matrix} a & bz \\ cz^{-1} & d 
\end{matrix}\right) \quad \mbox{if} \quad  \left( \begin{matrix} a & b \\ 
c & d \end{matrix}\right) \in \SU(2).\} \quad \quad \quad 
\]

\noindent
Note that each $\Gr_i$ is a compact subgroup of $\LG$ isomorphic to 
$\SU(2)$. Moreover, $\Gr_i$ belongs to the subgroup $\Gr$ of constant loops if 
$i \geq 1$. The rotation group $\T$ preserves each $\Gr_i$, acting trivially 
on $\Gr_i$ for $i \geq 1$, and nontrivially on $\Gr_0$.

\medskip
\begin{defn} 
For any proper subset $I \subset \{0,1,\ldots,n\}$, let $\Gr_I$ denote the group generated by $\Gr_i$, $i \in I$, and let the parabolic subgroup $\Hg_I \subset \LG$ be the group generated by $\Tr$ and $\Gr_I$. For the empty set, we define $\Hg_I$ to be 
$\Tr$. Similarly, we define $\Hbg_I \subset \LbG$ to be the induced central extension of $\Hg_I$. 
\end{defn}

\medskip
\begin{remark}
Henceforth, we only consider proper subsets $I \subset \{0,1,\ldots,n\}$. The groups $\Hbg_I$ are compact Lie groups that are preserved under the action of the rotation group $\T$, with $\T$ acting nontrivially on $\Hbg_I$ if and only if $0 \in I$. In particular, $\T \ltimes \Hbg_I \subseteq \T \ltimes \LbG$ is a well defined compact Lie subgroup. In addition, all representations of $\T \ltimes \Hbg_I$ of level $k$ appear in $\mathcal{H}_k$ for $k>0$. It follows therefore that ${}^k \K_{\T \ltimes \Hbg_I}(X)$ is a module over standard equivariant $\K$-theory: $\K_{\T \ltimes \Hg_I}(X)$ for any $\T \ltimes \Hbg_I$-space $X$. \end{remark}

\noindent
\medskip
The following theorem was proven in \cite{KM}:

\medskip
\begin{thm}
The space $\mathcal{A}$ of principal $\Gr$ connections on the trivial bundle $\Gr \times S^1$ is $\T \ltimes \LbG$-equivariantly homeomorphic to the ``Topological Tits building" \cite{Ki} given by the the homotopy colimit of homogeneous spaces over all proper subsets $I \subset \{0,1,\ldots,n\}$:
\[ \hocolim_{ I \subset \{0,1,\ldots,n\}} ( \T \ltimes \LG )/ ( \T \ltimes \Hg_I ) =  \hocolim_{ I \subset \{0,1,\ldots,n\}} ( \T \ltimes \LbG )/ ( \T \ltimes \Hbg_I ). \]
Furthermore, the space $\mathcal{A}$ is the universal space for proper actions of $\T \ltimes \LbG$.
\end{thm}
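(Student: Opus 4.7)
My plan is to treat $\mathcal{A}$ as an infinite-dimensional analogue of the Bruhat--Tits building for $\T \ltimes \LbG$, with the fundamental alcove serving as a fundamental domain. I would begin by identifying $\mathcal{A}$ with the space of $\mathfrak{g}$-valued $1$-forms on $S^1$; the gauge action of $\LG$ is then affine, and the central $S^1$ of $\LbG$ acts trivially on $\mathcal{A}$, so the two homotopy colimits in the statement necessarily agree. The holonomy map gives a bijection between gauge orbits and conjugacy classes in $\Gr$, so every connection is $(\T \ltimes \LbG)$-equivalent to a unique constant connection $A_t = t\, d\theta$ with $t$ in the fundamental alcove $\Delta \subset \mathfrak{t}$. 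The alcove is an $n$-simplex whose facets $\Delta_I$ are indexed by proper subsets $I \subsetneq \{0, 1, \ldots, n\}$, with the relative interior $\Delta_I^{\circ}$ consisting of those $t$ for which precisely the affine simple roots $\alpha_i$ with $i \in I$ vanish.

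The core step is the stabilizer computation: for $t$ in the relative interior of $\Delta_I$, the isotropy of $A_t$ in $\T \ltimes \LbG$ is precisely $\T \ltimes \Hbg_I$. The reason is that an element of $\LG$ fixes $A_t$ if and only if it commutes with the covariant derivative $d/d\theta + \ad(t)$, and a direct root-space computation shows the fixers are generated by $\Tr$ together with those affine simple root $\SU(2)$-subgroups $\Gr_i$ with $\alpha_i(t) = 0$, namely those with $i \in I$. Incorporating rotations of $S^1$ (which preserve the constant $A_t$) and the central extension produces the claimed stabilizer $\T \ltimes \Hbg_I$. With this in place, $\mathcal{A}$ is identified with the quotient of $(\T \ltimes \LbG) \times \Delta$ by the relation $(g,t) \sim (gh,t)$ for $h$ in the stabilizer of $t$; since the face poset of $\Delta$ is precisely the poset of proper subsets $I \subsetneq \{0, 1, \ldots, n\}$, realizing a simplex as the homotopy colimit over its face category yields
\[ \mathcal{A} \cong \hocolim_{I} (\T \ltimes \LbG)/(\T \ltimes \Hbg_I). \]

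For the universal proper space assertion, $\mathcal{A}$ is contractible (being affine) with all isotropies among the compact subgroups $\T \ltimes \Hbg_I$. I would invoke a fixed-point principle for the natural non-positively curved structure on $\mathcal{A}$ to show that any compact subgroup of $\T \ltimes \LbG$ fixes a non-empty convex (hence contractible) subset of $\mathcal{A}$, and in particular is subconjugate to some $\T \ltimes \Hbg_I$. Standard equivariant obstruction theory then produces the desired equivariant map from any proper $\T \ltimes \LbG$-space to $\mathcal{A}$, unique up to equivariant homotopy. The main obstacle I anticipate is the stabilizer computation, which requires carefully matching the affine simple root data to the wall structure of $\Delta$ and tracking the interplay between rotations and the central extension; once that is done, the simplex-as-homotopy-colimit identification and the fixed-point argument for universality both follow from standard machinery.
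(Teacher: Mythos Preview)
The paper does not actually contain a proof of this theorem: it is quoted from \cite{KM} with the preamble ``The following theorem was proven in \cite{KM},'' so there is no in-paper argument to compare against. Your outline is the standard one (and is essentially what is carried out in \cite{KM} and \cite{Ki}): identify $\mathcal{A}$ with the affine space of $\mathfrak{g}$-valued $1$-forms, use holonomy to reduce to constant connections indexed by the affine alcove $\Delta$, compute the isotropy on each open face $\Delta_I^\circ$ to be $\T \ltimes \Hbg_I$, and then recognize the resulting stratified $\T \ltimes \LbG$-space as the homotopy colimit over the face poset of $\Delta$. The universal-proper-space claim via a fixed-point theorem for compact groups acting affinely by isometries on $\mathcal{A}$ (a Hilbert affine space under the $L^2$ metric) is likewise the expected argument.

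One small point worth tightening: your stabilizer sketch says that fixers of $A_t$ in $\LG$ are generated by $\Tr$ and the $\Gr_i$ with $i \in I$; strictly speaking, the centralizer of $\exp(2\pi i t)$ in $\Gr$ (equivalently the $\LG$-stabilizer of the constant connection $A_t$) is the connected reductive subgroup with root system consisting of all affine roots vanishing at $t$, and one must check this coincides with $\Hg_I$ as defined in the paper (generated by $\Tr$ and the simple-root $\SU(2)$'s for $i\in I$). This is true precisely because $\Gr$ is simply connected, so the centralizers of torus elements are connected and generated by the maximal torus together with the root $\SU(2)$'s for the vanishing simple affine roots; without simple connectivity the stabilizers can be larger by a finite group and the face-poset description would need adjustment. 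Aside from making that hypothesis explicit at the key step, your proposal is sound and matches the literature.
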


\medskip
\noindent
Consider the inclusion of the maximal torus $ {\bf{T}} := \T \times \Tr \times \, S^1 \subset \T \ltimes \LbG$. Let $\FT$ be the normalizer of the maximal torus. Recall that $\FT$ is an extension of a discrete group $\tilde{\W}(\Gr)$ by the torus ${\bf T}$: 
\[ 1 \rightarrow {\bf T} \longrightarrow \FT \longrightarrow \tilde{\W}(\Gr) \rightarrow 1. \]
The group $\tilde{\W}(\Gr)$ is the Weyl group of $\T \ltimes \LG$ known as the Affine Weyl group. It  is equivalent to $\pi_1(\Tr) \rtimes \W(\Gr)$, where $\W(\Gr)$ is the Weyl group of $\Gr$.

\medskip
\noindent
The fixed subspace of $\mathcal{A}$ under ${\bf{T}}$ is given by the universal space for proper $\FT$-actions: 
\[ \Sigma = \mathcal{A}^{{\bf{T}}} = \hocolim_{I \subset \{0,1,\ldots,n\}} \FT/\FTI =  \hocolim_{I \subset \{0,1,\ldots,n\}} \tilde{\W}(\Gr)/\W_I, \]
where $\FTI$ is the normalizer of ${\bf{T}}$ in $\Hbg_I$, and $\W_I$ is the corresponding Weyl group. The space $\Sigma$ is in fact the space of principal connections on the trivial principal $\Tr$-bundle over a circle. In particular, it is homeomorphic to the Lie algebra $\lt$ of $\Tr$. 

\medskip
\begin{remark} \label{affine}
Consider the subspace $\Delta \subset \Sigma$:
\[ \Delta := \hocolim_{I \subset \{0,1,\ldots,n\}} ({\bf T}/{\bf T}) \, \, \subseteq \, \, \hocolim_{I \subset \{0,1,\ldots,n\}} \FT/\FTI. \]
Then $\Delta$ is the fundamental domain of the action of $\tilde{\W}(\Gr)$ on $\Sigma$, (or that of $\T \ltimes \LbG$ on $\mathcal{A}$). It is homeomorphic to a simplex with faces indexed by proper subsets of $ \{0,1,\ldots,n\}$. Under the identification of $\Sigma$ with $\lt$, the subspace of $\lt$ corresponding to $\Delta$ is called the Affine Alcove and is defined as:
\[ \Delta = \{ h \in \lt \, | \, \alpha_i(h) \geq 0, \, \, \,  \alpha_0(h) \leq 1 \quad 1 \leq i \leq n \}. \]
The space $\Delta$ may also be seen as an affine subspace of the Lie algebra of $\T \ltimes \LG$ induced by the inclusion $\Tr \subseteq  \LG$. In this identification, the groups $\T \ltimes \Hg_I$ are exactly the stabilizers (under the adjoint action of $\T \ltimes \LG$) of the walls $\Delta_I$ in $\Delta$ corresponding to the subset $I$. Using the homotopy decomposition of $\mathcal{A}$, one obtains an equivariant affine inclusion of $\mathcal{A}$ into the Lie algebra of $\T \ltimes \LG$. We may extend this inclusion to an equivariant inclusion of $i \R_+ \times \mathcal{A}$, where $i \R$ denotes the Lie algebra of the rotation circle $\T$. 

\end{remark}

\section{The equivariant sheaf ${}^k \mathcal{K}_{\T \ltimes \LbG}$ over $\mathfrak{h} \times \Sigma_\C$, and locality:} \label{locality}

\medskip
\noindent
In this section, we will construct a holomorphic sheaf  built from dominant $\K$-groups over the complexification $\mathfrak{h} \times \Sigma_\C$ (to be defined below) of the space $i \R_+  \times \Sigma$. The choice of this space from the standpoint of field theory if explained in detail in remark \ref{complexification} below. 

\medskip
\noindent
We construct this sheaf in two steps: first we construct a local coefficient system ${}^k \mathcal{K}^\ast_{\bf T}$ on $\mathfrak{h} \times \Sigma_\C$, and next we extend ${}^k \mathcal{K}_{\bf T}$ to a module ${}^k \mathcal{K}^\ast_{\T \ltimes \LbG}$ over the sheaf of holomorphic functions on $\mathfrak{h} \times \Sigma_\C$. 

\medskip
\noindent
Let us consider the stack $(i \R_+ \times \mathcal{A}) \qu (\T \ltimes \LbG)$. The coarse moduli space (or orbit space) of this stack is the space $i \R_+ \times \Delta$ described above in remark \ref{affine}. Let $\pi : i \R_+ \times \mathcal{A} \longrightarrow i \R_+ \times \Delta$ denote the projection map. Given a $\T \ltimes \LG$-space ${\mbox Y}$, one obtains a coefficient system ${}^k \mathcal{B}^\ast_{\bf T}(\mbox{Y})$ over $i \R_+ \times \Delta$ given by sheafifying the pre-sheaf:
\[ U \longmapsto {}^k \K^\ast_{\T \ltimes \LbG}(\pi^{-1}(U) \times \mbox{Y}). \]
Let $ (\tau, x) \in i\R_+ \times \Delta$ be a point, with $x$ being in the interior of the wall $\Delta_I \subset \Delta$, recall that the stabilizer of $(\tau, x)$ under the action of $\T \ltimes \LbG$ is the group $\T \ltimes \Hbg_I$. It follows form the definitions that the stalk at $(\tau, x)$ is the $\K$-theory group: ${}^k \K^\ast_{\T \ltimes \Hbg_I}(X)$. Since we are working in characteristic zero, this stalk is canonically isomorphic to the Weyl invariants: ${}^k \K_{{\bf T}}(\mbox{Y})^{\W_I}$. It follows from this description that the above coefficient system is the $\tilde{\W}(\Gr)$-invariants of the push-forward along $i \R_+ \times \Sigma \longrightarrow i \R_+ \times \Delta$ of the $\tilde{\W}(\Gr)$-equivariant constant coefficient system over $i \R_+ \times \Sigma$ with constant value ${}^k \K^\ast_{{\bf T}}(X)$ at each point. We call this constant coefficient system ${}^k \mathcal{K}^\ast_{\bf T}(\mbox{Y})$ over $i \R_+ \times \Sigma$. In the sequel, we shall find it more convenient to work with ${}^k \mathcal{K}^\ast_{\bf T}(\mbox{Y})$ instead of ${}^k \mathcal{B}^\ast_{\bf T}(\mbox{Y})$. It is straightforward to extend ${}^k \mathcal{K}_{\bf T}(\mbox{Y})$ to a coefficient system over the complexification of $i \R_+ \times \Sigma$ as follows. Let us identify the complexification of the positive energy axis $i \R_+$ with the upper half plane $\mathfrak{h}$. Recall that $\Sigma$ was homeomorphic to the Lie algebra $\lt$ of the maximal torus $\Tr$. Hence we may define $\Sigma_\C$ to be $\lt \otimes \C$ (see remark \ref{complexification} below). 

\medskip
\noindent
The space $\mathfrak{h} \times \Sigma_\C$ supports a free affine action of a group $\mathcal{N} = (\pi_1(\Tr) \oplus \pi_1(\Tr)) \rtimes \W(\Gr)$, with the action of $\W(\Gr)$ acting diagonally on both lattices. In particular, we have a canonical map $\mathcal{N} \longrightarrow \tilde{\W}(\Gr)$, given by:
\[ (\beta_1 \oplus \beta_2) \, w \longmapsto \beta_1\, w, \quad \quad \mbox{where} \quad \quad \beta_i \in \pi_1(\Tr), \quad \mbox{and} \quad w \in \W(\Gr). \]
 The action of $\mathcal{N}$ on $\mathfrak{h} \times \Sigma_\C$ is defined as: 
\[ ((\beta_1 \oplus \beta_2) \, w) \ast (\tau, h) = (\tau, w(h) + \tau \beta_1 + \beta_2). \]

\noindent
Given any $\T \ltimes \LG$-space $\mbox{Y}$, consider the infinite loop space: $\Map^{\bf T}\{ \mbox{Y}, \mathcal{F}_k\}$, endowed with an action of $\mathcal{N}$ that factors through the manifest action of $\tilde{\W}(\Gr)$. We therefore obtain an $\mathcal{N}$-equivariant parametrized spectrum given by the projection onto the first factor: 
\[ (\mathfrak{h} \times \Sigma_\C) \times \Map^{\bf T}\{ \mbox{Y}, \mathcal{F}_k\} \longrightarrow \mathfrak{h} \times \Sigma_\C. \]
We use the same notation ${}^k \mathcal{K}^\ast_{\bf T}(\mbox{Y})$ to denote the $\mathcal{N}$-equivariant sheaf on $\mathfrak{h} \times \Sigma_\C$ given by sheafifying the pre sheaf given by the homotopy groups of the space of sections:
\[ {}^k \mathcal{K}^\ast_{\bf T}(\mbox{Y})_U = \pi_{-\ast} \Map^{\bf T}(U \times \mbox{Y}, \mathcal{F}_k \}.\]
As before, the sheaf ${}^k \mathcal{K}^\ast_{\bf T}(\mbox{Y})$ is nothing more than a $\mathcal{N}$-eqivariant constant coefficient system on $\mathfrak{h} \times \Sigma_\C$, with values: ${}^k \K^\ast_{\bf T}(\mbox{Y})$. 

\bigskip
\begin{remark} \label{complexification}
The ``correct" definition of the complexification of the stack $(i \R_+ \times \mathcal{A}) \qu (\T \ltimes \LbG)$ is understood in the framework of BV-BRST quantization in field theory. To begin with, the complexification of the stack $\mathcal{A} \qu (\T \ltimes \LbG)$ should be understood as the ``phase space" or the tangent bundle of $\mathcal{A} \qu (\T \ltimes \LbG)$. Given any (well behaved) second-order equations of motion defined on the gauge fields restricted to the germ of a cylinder, the phase space parametrizes the (classical) solutions since these solutions are uniquely determined by the Cauchy data, or points in the tangent bundle of $ \mathcal{A}$. This tangent bundle has the form: 
\[ \mbox{T}(\mathcal{A}) = \mathcal{A} \times \Omega^1, \]
where $\Omega^1 := \Omega^1(S^1, \mathfrak{g})$ are the one forms on $S^1$ with values in the Lie algebra $\mathfrak{g}$ of $\Gr$. The space $\Omega^1$ is commonly known as conjugate gauge momenta. 

\medskip
\noindent
The gauge action of $\T \ltimes \LbG$ on $\mathcal{A}$ gives rise to an infinitesimal action of $\mbox{Lie}(\T \ltimes \LbG)$ on $\mbox{T}(\mathcal{A})$. We therefore get an action of $\mathbb{G}$ on $\mbox{T}(\mathcal{A})$, where $\mathbb{G}$ is given by the Gauge group extended by conjugate-symmetries: 
\[ \mathbb{G} = (\T \ltimes \LbG) \ltimes \mbox{Lie}(\T \ltimes \LbG). \] 

\noindent
Let $\mathfrak{h}$ be the upper half plane, which we think of as the complexification of the positive axis $i \R_+$. Notice that $\mathfrak{h}$ determines a family of annuli in $\C$ with parametrized boundary: 
\[ \tau \in \mathfrak{h} \longmapsto \mbox{A}_\tau := \{ z \in \C \, | \quad |q| \leq |z| \leq 1, \quad q = \mbox{exp}(2 \pi i \tau). \} \]
Giving $\mathfrak{h}$ the trivial $\mathbb{G}$-action, the stack $(\mathfrak{h} \times \mbox{T}(\mathcal{A})) \qu \mathbb{G}$ can be understood as the stack parametrizing solutions to second-order equations\footnote{as before, we assume good behaviour of these equations} of motion along the germ of the cylinder about the parametrized inner boundary of $\mbox{A}_\tau$, as we vary $\tau$ in $\mathfrak{h}$.

\medskip
\noindent
Next we use a gauge-fixing procedure to reduce the stack $(\mathfrak{h} \times \mbox{T}(\mathcal{A})) \qu \mathbb{G}$ to the stack $(\mathfrak{h} \times \Sigma_\C) \qu \tilde{\W}(\Gr)$, where we recall that $(\mathfrak{h} \times \Sigma_\C)$ is seen as the canonical subspace of the complexified Lie algebra $(\C \times \Sigma_\C)$ of the maximal torus $\T \times \Tr \subset \T \ltimes \LG$ with the induced Affine Weyl action by $\tilde{\W}(\Gr)$. To see this, we proceed by first extending the inclusion $i \R_+ \times \mathcal{A} \subset \mbox{Lie}(\T \ltimes \LG)$ of remark \ref{affine} to an inclusion:
\[   \rho: \mathfrak{h} \times \mbox{T}(\mathcal{A}) = \mathfrak{h} \times \mathcal{A} \times \Omega^1 \longrightarrow \mbox{Lie}(\T \ltimes \LG) \otimes \C, \quad  \quad (\tau, d + \alpha d\theta, \beta d\theta) \longmapsto  \tau(\partial + \alpha) + \beta, \]
where $i\partial$ represents the generator of the Lie algebra of $\T$. It is easy to check that the action of $\T \ltimes \LbG$ on the left hand side is compatible with the complexification of the Adjoint action on the right hand side. Furthermore, the Adjoint action canonically extends to all of $\mathbb{G}$ making $\rho$ a $\mathbb{G}$-equivariant map over $\mathfrak{h}$.

\medskip
\noindent
Then one uses the above formulas to show that the subspace $(\mathfrak{h} \times \Sigma_\C)$ can be identified with the fixed points of $(\mathfrak{h} \times \mbox{T}(\mathcal{A}))$ under the action of the maximal torus ${\bf T} \subset \T \ltimes \LbG \subset \mathbb{G}$. Furthermore, this identification induces an equivalence of the coarse moduli space $\mathcal{M}$ of the stack $(\mathfrak{h} \times \mbox{T}(\mathcal{A})) \qu \mathbb{G}$ with that of $(\mathfrak{h} \times \Sigma_\C) \qu \tilde{\W}(\Gr)$. Now given a point $y \in \mathcal{M}$, let $\mbox{A}_y$ and $\mbox{B}_y$ denote the (conjugacy classes) of automorphism groups of any object over $y$ in the stack $(\mathfrak{h} \times \mbox{T}(\mathcal{A})) \qu \mathbb{G}$ and $(\mathfrak{h} \times \Sigma_\C) \qu \tilde{\W}(\Gr)$ respectively. The above description of the action implies that the group $\mbox{B}_y$ is the Weyl group of the maximal compact factor in the group $\mbox{A}_y$. 

\medskip
\noindent
As before, the coefficient system on $\mathcal{M}$ constructed using the dominant $\K$-theory evaluated on the stack $(\mathfrak{h} \times \mbox{T}(\mathcal{A})) \qu \mathbb{G}$, agrees with the $\tilde{\W}(\Gr)$-invariants of the push-forward of the sheaf ${}^k \mathcal{K}^\ast_{\bf T}$ over $(\mathfrak{h} \times \Sigma_\C)$. This justifies our choice of using the sheaf ${}^k \mathcal{K}^\ast_{\bf T}$ over $(\mathfrak{h} \times \Sigma_\C) \qu \tilde{\W}(\Gr)$. 
\end{remark}

\medskip
\noindent
The final step in this quantization procedure is to extend ${}^k \mathcal{K}^\ast_{\bf T}(\mbox{Y})$ to a sheaf we denote by ${}^k \mathcal{K}^\ast_{\T \ltimes \LbG}(\mbox{Y})$ which admits a module structure over the sheaf $\mathcal{O}(\mathfrak{h} \times \Sigma_\C)$ of holomorphic functions on $\mathfrak{h} \times \Sigma_\C$. We do this by representing characters of $\T \times \Tr$ as functions on the phase space $\mathfrak{h} \times \Sigma_\C$. The relation between the sheaf ${}^k \mathcal{K}^\ast_{\T \ltimes \LbG}(\mbox{Y})$ and modular functors and genus zero conformal blocks is spelled out in remark \ref{cb}. 

\medskip
\begin{defn} \label{chi}
Let $\T_\C \times \Tr_\C$ denote the complexification of the torus $\T \times \Tr$. Consider the holomorphic exponential map: 
\[ \chi : \mathfrak{h} \times \Sigma_\C \longrightarrow \T_\C \times \Tr_\C, \quad   \chi(\tau, h) = (e^{2 \pi i \tau}, \mbox{exp}( 2 \pi i h)). \]
The map $\chi$ can be made $\mathcal{N}$-equivariant, with the action of $\mathcal{N}$ on $\T_\C \times \Tr_\C$ factoring through the affine action of $\tilde{\W}(\Gr)$. Then the characters induce an injective ring map (also denoted $\chi$):
\[ \chi : \Rep(\T_\C \times \Tr_\C) \longrightarrow \mathcal{O}(\mathfrak{h} \times \Sigma_\C). \]
\end{defn}

\medskip
\noindent
The above map $\chi$ allows us to define: 

\medskip
\noindent
\begin{defn}
We define the $\mathcal{N}$-eqivariant sheaf ${}^k \mathcal{K}^\ast_{\Tr \ltimes \LbG}(\mbox{Y})$ over $\mathcal{O}(\mathfrak{h} \times \Sigma_\C)$ on the level of stalks:
\[ {}^k \mathcal{K}^\ast_{\T \ltimes \LbG}(\mbox{Y})_{(\tau, h)} := {}^k \mathcal{K}^\ast_{\bf T}(\mbox{Y})_{(\tau, h)} \hat{\otimes}_{\chi} \mathcal{O}(\mathfrak{h} \times \Sigma_\C)_{(\tau, h)}, \]
where the completed tensor product is defined as:
\[ {}^k \mathcal{K}_{\bf T}^\ast(\mbox{Y})_{(\tau, h)} \hat{\otimes}_{\chi} \mathcal{O}(\mathfrak{h} \times \Sigma_\C)_{(\tau, h)} := \invlim \{ {}^k \mathcal{K}_{\bf T}^\ast(\mbox{Y}_\alpha)_{(\tau, h)} \otimes_{\chi} \mathcal{O}(\mathfrak{h} \times \Sigma_\C)_{(\tau, h)} \}, \]
the inverse limit being taking over all finite ${\bf T}$-equivariant sub-skeleta $\mbox{Y}_\alpha$ of $\mbox{Y}$. 
\end{defn}

\medskip
\noindent
The above sheaf satisfies a fixed point theorem: 
\medskip

\begin{thm}  \label{fixed}
Given a point $(\tau, h) \in \mathfrak{h} \times \Sigma_\C$. Define elements $h_2, h_1 \in \Sigma$ to be the unique elements so that $h = - \tau h_1 + h_2$. 
Let $\R(h)$ denote the sub torus of $\T \times \Tr$ given by the closure of the one-parameter subgroup $(e^{2 \pi i x}, \mbox{exp}(-2 \pi i x h_1))$, with $x \in \R$. Also define $\Z(h)$ to be the closed subgroup of $\Tr$ generated by the element $\mbox{exp}(2 \pi i h_2)$. Then the inclusion of the fixed points $\mbox{Z}(\tau,h) := \mbox{Y}^{\R(h)} \cap \mbox{Y}^{\Z(h)}  \subseteq \mbox{Y}$ induces an isomorphism: 
\[ {}^k \mathcal{K}_{\T \ltimes \LbG}^\ast(\mbox{Y})_{(\tau, h)} \longrightarrow {}^k \K_{\bf T}(\mbox{Z}(\tau,h)) \hat{\otimes}_{\chi} \mathcal{O}(\mathfrak{h} \times \Sigma_\C)_{(\tau, h)}. \]
\end{thm}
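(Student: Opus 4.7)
The strategy is an Atiyah--Segal style localization argument in ${\bf T}$-equivariant $\K$-theory, adapted to the parametrized sheaf setting. The key point is that tensoring along $\chi$ with the stalk $\mathcal{O}(\mathfrak{h} \times \Sigma_\C)_{(\tau,h)}$ amounts to localizing the ${\bf T}$-equivariant $\K$-theory at the single complex point $\chi(\tau, h) \in \T_\C \times \Tr_\C$; by a standard support argument this isolates the fixed points of the smallest closed subgroup $S \subseteq \T \times \Tr$ whose algebraic complexification contains $\chi(\tau, h)$, and a short character computation identifies $S$ with $R(h) \cdot Z(h)$.

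The plan is to proceed in three stages. First, since the completed tensor product defining the left-hand side is an inverse limit over finite ${\bf T}$-equivariant sub-skeleta $\mbox{Y}_\alpha \subseteq \mbox{Y}$, and fixed point sets are compatible with this filtration, I may assume $\mbox{Y}$ is a finite ${\bf T}$-CW complex. Second, both sides are cohomological in $\mbox{Y}$: the right side because dominant $\K$-theory of fixed sets is cohomological and fixed points commute with attaching cells (since ${\bf T}$ is abelian); the left side because $\mathcal{O}(\mathfrak{h} \times \Sigma_\C)_{(\tau, h)}$ is flat over the finitely generated $R(\T \times \Tr)$-module structure on each skeleton. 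A Mayer--Vietoris induction then reduces the claim to the case $\mbox{Y} = {\bf T}/\Hg$ for a closed subgroup $\Hg \subseteq \T \times \Tr$.

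For a single orbit $\mbox{Y} = {\bf T}/\Hg$, the abelianness of ${\bf T}$ implies that $({\bf T}/\Hg)^{R(h)} \cap ({\bf T}/\Hg)^{Z(h)}$ equals ${\bf T}/\Hg$ when $R(h) \cdot Z(h) \subseteq \Hg$ and is empty otherwise. In the former case the inclusion is the identity and the claim is trivial; in the latter, the right side vanishes, and one must verify that the left side does too. Since ${}^k\K^*_{\bf T}({\bf T}/\Hg)$ is a cyclic module over $R(\T \times \Tr)$ whose support in $\T_\C \times \Tr_\C$ is the Zariski closure $\Hg_\C$, this vanishing is equivalent to $\chi(\tau, h) \notin \Hg_\C$. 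For a character $\lambda = (a, \mu) \in \Z \oplus \hat{\Tr}$, the identity
\[ \lambda(\chi(\tau, h)) \;=\; \exp\bigl(2\pi i\bigl(\tau(a - \mu(h_1)) + \mu(h_2)\bigr)\bigr) \]
together with $\operatorname{Im}(\tau) > 0$ shows that $\lambda$ vanishes at $\chi(\tau, h)$ if and only if $a = \mu(h_1)$ and $\mu(h_2) \in \Z$. These are precisely the conditions that $\lambda$ annihilate the defining one-parameter subgroup of $R(h)$ and the generator $\exp(2\pi i h_2)$ of $Z(h)$ respectively. Hence $\chi(\tau, h) \in \Hg_\C$ if and only if $R(h) \cdot Z(h) \subseteq \Hg$, completing the argument.

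The main obstacle is verifying the support statement at level $k$: one must confirm that ${}^k\K^*_{\bf T}({\bf T}/\Hg)$, which is built from level-$k$ positive-energy representations rather than ordinary ${\bf T}$-representations, still has support exactly $\Hg_\C$ in $\T_\C \times \Tr_\C$. A secondary subtlety is that the inverse-limit completion must commute with localization at $(\tau, h)$; this relies on the finiteness of ${\bf T}$-isotropy per skeleton together with the regularity of $\mathcal{O}(\mathfrak{h} \times \Sigma_\C)$.
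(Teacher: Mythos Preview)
Your proposal is correct and follows essentially the same Atiyah--Segal localization strategy as the paper, which invokes Segal's localization theorem directly (as a module statement, via the $\K_{\T \times \Tr}$-module structure on ${}^k\K_{\bf T}$) and then performs the same character computation to determine which $e^\alpha - 1$ become units in $\mathcal{O}(\mathfrak{h} \times \Sigma_\C)_{(\tau,h)}$. Your flagged obstacle is not a genuine obstruction: since the central $S^1$ acts trivially on $\mbox{Y}$, the group ${}^k\K^*_{\bf T}((\T \times \Tr)/\Hg)$ is a free rank-one $\Rep(\Hg)$-module generated by $u^k$, so its support in $\T_\C \times \Tr_\C$ is exactly $\Hg_\C$, just as in the untwisted case.
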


\begin{proof}
Working by induction on the $\T \times \Tr$-cells of $\mbox{Y}$, or by invoking the localization theorem in \cite{S2}, one can show that given a subgroup $S \subset \T \times \Tr$, the the restriction to the fixed points: $\mbox{Y}^S \subseteq \mbox{Y}$ induces an isomorphism in equivariant $\K$-theory once we invert the multiplicative set $\Rep(S)_+$ generated characters of $\Rep(\T \times \Tr)$ of the form $e^{\alpha}-1$ for weights $\alpha$ of $\T \times \Tr$ that restrict nontrivially to $S$. We say that the localization of equivariant $\K$-theory: $\K_{\T \times \Tr}(\mbox{Y})[\Rep(S)_+]^{-1}$ is localized on the fixed subspace $\mbox{Y}^S$. Now, a character $e^\alpha - 1$ is invertible in $\mathcal{O}(\mathfrak{h} \times \Sigma_\C)_{(\tau, h)}$ if and only if we have: 
\[ \alpha \langle \tau, h \rangle \notin \Z, \quad \mbox{or equivalently}: \quad \alpha \langle 1, -h_1 \rangle \neq 0, \quad \mbox{or} \quad \alpha \langle 0, h_2 \rangle \notin \Z. \]
The first condition implies that $e^\alpha - 1$ is nontrivial when restricted to $\R(h)$ and the second condition implies that $e^\alpha - 1$ is nontrivial when restricted to $\Z(h)$. The result follows. 
\end{proof}

\medskip
\noindent
Of particular interest to us is the space $\mbox{Y} = \LM$, where $\M$ is a given $\Gr$-space. In this case, the space $\mbox{Z}(\tau,h)$ in the previous theorem has an appealing interpretation. We leave it to the reader to show:

\bigskip
\begin{corr} \label{walls}
 $\LM^{\R(h)}$ can be identified with the space of periodic loops of the form: 
\[ \LM^{\R(h)} = \{ \gamma \, | \, \gamma(e^{2 \pi ix}) = \mbox{exp}(2 \pi i x h_1) \, m, \, \, m \in \M \}. \]
In particular, $\LM^{\R(h)}$ is abstractly homeomorphic to $\M^{exp(2 \pi i h_1)}$. Similarly, we have the identification: $\LM^{\Z(h)} = \Lo(\M^{exp(2\pi i h_2)})$. Consequently, we have an equality of $\T \times \Tr$-spaces: 
 \[  \LM^{\R(h)} \cap \LM^{\Z(h_2)} =  \{ \gamma \in \LM \, | \, \gamma(e^{2 \pi ix}) = \mbox{exp}(2 \pi i x h_1) \, m, \quad   m \in \M^{\langle exp(2 \pi i h_1), \, exp(2 \pi i h_2) \rangle} \}.\]
Note that this space is abstractly homeomorphic to the finite $\T \times \Tr$-space: $\M^{\langle exp(2 \pi i h_1), exp(2 \pi i h_2) \rangle}$. 
\end{corr}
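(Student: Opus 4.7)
The plan is to unwind each fixed-point condition directly and then take the intersection; since the corollary is essentially formal, the proposal will be brief.

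First I would fix the action of $\T \times \Tr$ on $\LM$: an element $(\alpha, g)$ sends a loop $\gamma$ to $((\alpha, g) \ast \gamma)(z) = g \cdot \gamma(\alpha z)$. Since $\R(h)$ is the closure of the one-parameter subgroup $x \mapsto (e^{2\pi i x}, \exp(-2\pi i x h_1))$, being $\R(h)$-fixed translates into the functional equation $\gamma(e^{2\pi i x} z) = \exp(2\pi i x h_1) \cdot \gamma(z)$ for all $x \in \R$ and $z \in S^1$. Setting $z = 1$ and $m := \gamma(1)$ recovers the asserted parametrization $\gamma(e^{2\pi i x}) = \exp(2\pi i x h_1) \cdot m$. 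The well-definedness condition $\gamma(1) = \gamma(e^{2 \pi i})$ then forces $m = \exp(2\pi i h_1) \cdot m$, i.e.\ $m \in \M^{\exp(2\pi i h_1)}$; conversely any such $m$ yields a legitimate fixed loop, so $\gamma \mapsto \gamma(1)$ is a continuous bijection with continuous inverse, giving the abstract homeomorphism onto $\M^{\exp(2\pi i h_1)}$.

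For the $\Z(h)$-fixed locus, I would use that $\Z(h) \subset \Tr$ acts pointwise on loops, so $\gamma$ is fixed iff every value $\gamma(z)$ is fixed by $\exp(2\pi i h_2)$; this gives $\LM^{\Z(h)} = \Lo(\M^{\exp(2\pi i h_2)})$ at once.

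To handle the intersection, I would combine the two conditions: the loop must have the form $\gamma(e^{2\pi i x}) = \exp(2\pi i x h_1) \cdot m$ and its image must lie in $\M^{\exp(2\pi i h_2)}$. Since $\exp(2\pi i x h_1)$ and $\exp(2\pi i h_2)$ both lie in the abelian torus $\Tr$ they commute, so the whole orbit $\{\exp(2\pi i x h_1) \cdot m : x \in \R\}$ sits inside $\M^{\exp(2\pi i h_2)}$ iff the single element $m$ does. Combined with the $\R(h)$-constraint this forces $m \in \M^{\langle \exp(2\pi i h_1), \exp(2\pi i h_2) \rangle}$, and $\gamma \mapsto m$ furnishes the required homeomorphism. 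I do not anticipate any real obstacle; the only mild care needed is in pinning down the rotation convention so as to match the sign in the statement, and in verifying that the resulting continuous bijection is a homeomorphism for the compact-open topology on $\LM$, which is immediate from the explicit parametrization by $m$.
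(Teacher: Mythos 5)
Your proof is correct, and since the paper explicitly leaves this corollary to the reader, your direct unwinding of the fixed-point conditions (evaluation at $1$ for the $\R(h)$-condition, pointwise fixing for $\Z(h)$, and commutativity of $\Tr$ to reduce the intersection to a condition on $m = \gamma(1)$) is exactly the intended argument. The only points needing care are the ones you already flag: the rotation convention fixing the sign of $h_1$, and the periodicity constraint $\gamma(1)=\gamma(e^{2\pi i})$ forcing $m \in \M^{\exp(2\pi i h_1)}$.
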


\medskip
\noindent
This observation, along with the above theorem leads us to:

\medskip
\begin{thm}\label{dom}
Given a finite $\Gr$-space $\M$ (i.e. a space $\M$ equivalent to a finite $\Gr$-CW complex), the sheaf ${}^k \mathcal{K}^\ast_{\T \ltimes \LbG}(\LM)$ is a $\mathcal{N}$-equivariant sheaf of $\mathcal{O}(\mathfrak{h} \times \Sigma_\C)$-modules. Furthermore, it is a cohomological functor in $\M$. In particular, one obtains the Mayer-Vietoris sequence in $\M$ for each stalk of ${}^k \mathcal{K}^\ast_{\T \ltimes \LbG}(\LM)$. 
\end{thm}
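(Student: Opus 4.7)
The plan is to reduce both claims to the stalk-wise description of the sheaf provided by Theorem \ref{fixed} and Corollary \ref{walls}, and then to invoke the standard Mayer-Vietoris property of equivariant $\K$-theory on finite $\T \times \Tr$-CW complexes.

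First, I would observe that the $\mathcal{O}$-module structure is built into the definition of ${}^k \mathcal{K}^\ast_{\T \ltimes \LbG}(\LM)$ via the stalkwise completed tensor product. For $\mathcal{N}$-equivariance, note that ${}^k \mathcal{K}^\ast_{\bf T}(\LM)$ is already $\mathcal{N}$-equivariant by construction: the action of $\mathcal{N}$ on $\mathfrak{h} \times \Sigma_\C$ factors through $\tilde{\W}(\Gr)$, which in turn acts on $\Map^{\bf T}\{\LM, \mathcal{F}_k\}$ through its action on $\bf T$. Since the exponential map $\chi$ of Definition \ref{chi} is $\mathcal{N}$-equivariant, tensoring over $\chi$ with $\mathcal{O}(\mathfrak{h} \times \Sigma_\C)$ preserves the $\mathcal{N}$-action, giving the desired $\mathcal{N}$-equivariant sheaf of $\mathcal{O}$-modules.

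For the cohomological functor property, the central step is Theorem \ref{fixed}, which identifies the stalks as
\[ {}^k \mathcal{K}^\ast_{\T \ltimes \LbG}(\LM)_{(\tau, h)} \cong {}^k \K^\ast_{\bf T}(\M^{\langle g_1, g_2 \rangle}) \hat{\otimes}_\chi \mathcal{O}(\mathfrak{h} \times \Sigma_\C)_{(\tau, h)}, \]
where $g_i = \exp(2 \pi i h_i)$ and I have used Corollary \ref{walls} to re-express $\mbox{Z}(\tau, h)$ as the fixed-point subspace $\M^{\langle g_1, g_2 \rangle}$, regarded as a finite $\T \times \Tr$-CW complex. Finiteness of $\M$ collapses the completed tensor product to the ordinary one. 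A $\Gr$-invariant open cover $\M = A \cup B$ restricts to a $\T \times \Tr$-invariant open cover $\M^{\langle g_1, g_2 \rangle} = A^{\langle g_1, g_2 \rangle} \cup B^{\langle g_1, g_2 \rangle}$, and the standard Mayer-Vietoris long exact sequence for ${}^k \K^\ast_{\bf T}$ on finite ${\bf T}$-CW complexes applies to the first tensor factor.

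The main obstacle will be verifying that tensoring over $\Rep(\T \times \Tr)$ with $\mathcal{O}(\mathfrak{h} \times \Sigma_\C)_{(\tau, h)}$ via $\chi$ preserves exactness. I would establish this through flatness: $\chi$ factors as $\Rep(\T \times \Tr) \hookrightarrow \mathcal{O}(\T_\C \times \Tr_\C) \to \mathcal{O}(\mathfrak{h} \times \Sigma_\C)$, where the first inclusion is the analytification of Laurent polynomials over a Noetherian regular domain (hence flat), and the second is flat because $\chi$ is a local biholomorphism. Localizing the target at $(\tau, h)$ preserves flatness, so tensoring the Mayer-Vietoris sequence of the previous step with the local ring $\mathcal{O}(\mathfrak{h} \times \Sigma_\C)_{(\tau, h)}$ yields the desired Mayer-Vietoris for the stalks of ${}^k \mathcal{K}^\ast_{\T \ltimes \LbG}(\LM)$, and the cohomological functor property follows.
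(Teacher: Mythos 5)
Your proposal is correct and follows essentially the same route as the paper: identify the stalks via Theorem \ref{fixed} and Corollary \ref{walls}, use finiteness of $\M$ to replace the completed tensor product by the ordinary one, and conclude by flatness of $\Rep(\T \times \Tr) \to \mathcal{O}(\mathfrak{h} \times \Sigma_\C)_{(\tau,h)}$. Your factorization of $\chi$ through $\mathcal{O}(\T_\C \times \Tr_\C)$ merely spells out the flatness claim that the paper asserts without proof.
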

\begin{proof}
To show that each stalk is a cohomological functor in $\M$, one simply observes that the finiteness of $\LM^{\R(h)}$ allows us to replace the completed tensor product (in the statement of theorem \ref{fixed}) with the standard tensor product. Now the inclusion of algebraic maps to holomorphic germs:  $\Rep(\T \times \Tr) \longrightarrow \mathcal{O}(\mathfrak{h} \times \Sigma_\C)_{(\tau, h)}$ can be shown to be a flat map. Hence, each stalk is a flat extension of a cohomological functor of $\M$, and is therefore itself a cohomological functor. 
\end{proof}

\medskip
\begin{remark} \label{cb}
Let us offer a geometric description of the sheaf ${}^k \mathcal{K}^\ast_{\Tr \ltimes \LbG}(\mbox{Y})$. Given a point in the phase space $x = (\tau, h) \in \mathfrak{h} \times \Sigma_\C$, one may define two different (albeit isomorphic) level $k$-representations on the same underlying Hilbert space by demanding that they are interpolated by the projective operator $\chi(x)$. This is precisely part of the the data describing the modular functor underlying the category of level $k$ representations  that corresponds to the annulus $\mbox{A}_\tau$ (indexed by $\tau \in \mathfrak{h}$ as described in remark \ref{complexification} above). Indeed, the operator $\chi(x)$ can be interpreted as the correlation function generating the conformal blocks over the space $\mathfrak{h}$ interpreted as a moduli space of annuli. We have therefore constructed a functor from the category of level $k$ representations to itself that fixes each underlying vector space, but conjugates the morphisms by $\chi(x)$. This induces a $\T \ltimes \LbG$-equivariant automorphism, which we denote by $\underline{\chi}(x)$, of the space of Fredholm operators on $\mathcal{H}_k$ induced by conjugation with $\chi(x)$. In this context, the fiber of the sheaf ${}^k \mathcal{K}^\ast_{\Tr \ltimes \LbG}(\mbox{Y})$ constructed above, at the point $x$, is the localization of ${}^k \K^\ast_{\bf T}(\mbox{Y}) \otimes \C$ about the fixed points of the automorphism $\underline{\chi}(x)$. 
\end{remark}

\section{The sheaf ${}^k \mathcal{G}^\ast(\M)$ over the universal elliptic curve, and Theta functions:}\label{grojnowski}

\medskip
\noindent
We begin this section with some important definitions: 
\begin{defn}
Define the $\W(\Gr)$-equivariant universal elliptic curve over $\mathfrak{h}$ as the quotient under the action of $(\pi_1(\Tr) \oplus \pi_1(\Tr)) \subset \mathcal{N}$:
\[ \mathcal{E}_{\Tr} := (\mathfrak{h} \times \Sigma_\C) /(\pi_1(\Tr) \oplus \pi_1(\Tr)). \] 
The holomorphic structure sheaf $\mathcal{O}(\mathcal{E}_{\Tr})$ is defined as: $\{ \zeta_\ast \,\mathcal{O}(\mathfrak{h} \times \Sigma_\C) \}^{(\pi_1(\Tr) \oplus \pi_1(\Tr))}$, where $\zeta$ denotes the projection map from $\mathfrak{h} \times \Sigma_\C$ to $\mathcal{E}_{\Tr}$. Similarly, we define the sheaf ${}^k \mathcal{G}^\ast(\M)$ of $\mathcal{O}(\mathcal{E}_{\Tr})$-modules to be: 
\[ {}^k \mathcal{G}^\ast(\M) := \{ \zeta_\ast \,  {}^k \mathcal{K}^\ast_{\T \ltimes \LbG}(\LM) \}^{(\pi_1(\Tr) \oplus \pi_1(\Tr))}. \] Note that the above sheaves are $\W(\Gr)$-equivariant. 
 \end{defn}

\medskip
\noindent
We may also define untwisted $\mathcal{N}$-equivariant sheaves of algebras: $\mathcal{K}^\ast_{\T \ltimes \LbG}(\LM))$, as well as an untwisted $\W(\Gr)$-eqivivariant sheaf: $\mathcal{G}^\ast(\M)$:

\medskip
\noindent
\begin{defn}
Let $\mathcal{K}^\ast_{\T \ltimes \LbG}(\LM)$ be the $\mathcal{N}$-equivariant sheaf of algebras obtained by extending the constant sheaf with values in (usual) $\T \times \Tr$-equivariant $\K$-theory of $\LM$, 
over $\mathcal{O}(\mathfrak{h} \times \Sigma_\C)$. Similarly, let $\mathcal{G}^\ast(\M)$ denote the $\W(\Gr)$-equivariant sheaf of algebras over the sheaf $\mathcal{O}(\mathcal{E}_{\Tr})$ given by $\{ \zeta_* \mathcal{K}^\ast_{\T \ltimes \LbG}(\LM) \}^{(\pi_1(\Tr) \oplus \pi_1(\Tr))} $. Notice in particular that the sheaves ${}^k \mathcal{K}^\ast_{\T \ltimes \LbG}(\LM)$ and ${}^k\mathcal{G}^\ast(\M)$ are modules over $\mathcal{K}^\ast_{\T \ltimes \LbG}(\LM))$ and $\mathcal{G}^\ast(\M)$ respectively. 
\end{defn}

\medskip
\noindent
It is well known \cite{K1} that the action of $\beta \in \pi_1(\Tr) \subset \tilde{\W}(\Gr)$ on the characters $q, e^\alpha, u$ under the decomposition ${\bf T}_\C = \T_\C \times \Tr_\C \times \, \C^\ast $ is given by the formula: 
\[ \beta \, u = u \, e^{\beta^\ast} \, q^{\frac{1}{2}\langle \beta, \beta \rangle},  \quad \beta \, e^\alpha = e^\alpha \, q^{\alpha(\beta)}, \quad \beta \, q = q. \]
where $\beta^\ast$ is the weight dual to $\beta$ and $\langle \,\,, \,\, \rangle$ denotes a canonical positive-definite quadratic form on $\pi_1(\Tr)$ \cite{K1} induced by the Cartan-Killing form. 

\medskip
\noindent
Now recall that $\chi$ of \ref{chi} represented an $\mathcal{N}$-equivariant map from $\mathfrak{h} \times \Sigma_\C$ to $\T_\C \times \Tr_\C$. The above formulas show that $\chi$ extends to an $\mathcal{N}$-equivariant map (also denoted $\chi$): 
\[ \chi : \mathfrak{h} \times \Sigma_\C \times \C  \longrightarrow \T_\C \times \Tr_\C \, \times \, \C, \quad \chi (\tau, h, z) = (e^{2 \pi i \tau}, \mbox{exp}( 2 \pi i h), z), \]
where the action of $\mathcal{N}$ on $\mathfrak{h} \times \Sigma_\C$ extends to an action on $\mathfrak{h} \times \Sigma_\C \times \C$ given by: 
\[ ((\beta_1 \oplus \beta_2) \,  w) \ast (\tau, h, z) = \big( \tau, \, w(h)+ \tau \beta_1 + \beta_2, \, z \, \mbox{exp}(2 \pi i \langle \beta_1, w(h) \rangle +\pi i \tau \langle \beta_1, \beta_1 \rangle)\big). \]

\bigskip

\begin{defn}
Define the central line bundle $\mathcal{L}$\footnote{the dual of $\mathcal{L}$ is also known as the Looijenga line bundle.} to be the $\mathcal{N}$-equivariant bundle:
\[ \mathcal{L} := \mathfrak{h} \times \Sigma_\C \times \C \longrightarrow \mathfrak{h} \times \Sigma_\C. \]
We will use the same notation to denote the $\W(\Gr)$-equivariant line bundle over $\mathcal{O}(\mathcal{E}_{\Tr})$ given by taking orbits under the $(\pi_1(\Tr) \oplus \pi_1(\Tr))$-action defined above: 
\[ \mathcal{L} :=  (\mathfrak{h} \times \Sigma_\C) \times_{(\pi_1(\Tr) \oplus \pi_1(\Tr))} \C \longrightarrow \mathcal{E}_{\Tr}.  \]
Let us also set the notation $\mathcal{L}^{-k}$ to denote the $k$-fold tensor product of the dual line bundle. 
\end{defn}

\bigskip
\begin{thm}
The $\W(\Gr)$-equivariant sheaf ${}^k\mathcal{G}^\ast(\M)$ over $\mathcal{O}(\mathcal{E}_{\Tr})$ is naturally isomorphic to a rank one locally free $\mathcal{G}^\ast(\M)$-module generated by the line bundle $\mathcal{L}^{-k}$. The corresponding result is also true for the sheaf ${}^k \mathcal{K}^\ast_{\T \ltimes \LbG}(\LM)$. 
\end{thm}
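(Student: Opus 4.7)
The plan is to reduce the statement to a representation-theoretic identification at the torus level and then propagate it through the complexification and the descent to the elliptic curve. The essential input is the observation that every irreducible level-$k$ representation of $\T \ltimes \LbG$ restricts, under the inclusion of the maximal compact torus ${\bf T} = \T \times \Tr \times S^1$, to a representation on which the central $S^1$ acts through the character $u^k$. Since $\mathcal{H}_k$ contains every such irreducible with infinite multiplicity, tensoring with the one-dimensional level-$k$ representation $\C_{u^k}$ realizes dominant $\K$-theory at the torus level as a free rank-one module
\[ {}^k \K^\ast_{\bf T}(\mbox{Y}) \;\cong\; \C_{u^k} \otimes \K^\ast_{\T \times \Tr}(\mbox{Y}) \]
over the usual $(\T \times \Tr)$-equivariant $\K$-theory.

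Next, I would trace the character $u^k$ through the holomorphic map $\chi$ of Definition \ref{chi}. The extension of $\chi$ to $\mathfrak{h} \times \Sigma_\C \times \C$ sends $u$ to the central coordinate $z$, so $u^k$ pulls back to $z^k$. Viewed as a fiberwise homogeneous function of degree $k$ on the total space of $\mathcal{L} = \mathfrak{h} \times \Sigma_\C \times \C$, the function $z^k$ is a nowhere-vanishing section of $\mathcal{L}^{-k}$ on the universal cover $\mathfrak{h} \times \Sigma_\C$. A direct comparison of the two prescribed $\mathcal{N}$-actions — the action on the total space of $\mathcal{L}$ given in the text, and the action on characters dictated by $\beta \ast u = u \cdot e^{\beta^\ast} \cdot q^{\frac{1}{2}\langle \beta, \beta \rangle}$ — shows, via the identification $\beta^\ast(h) = \langle \beta, h \rangle$ coming from the positive-definite quadratic form, that the two transformation laws for $z^k$ agree. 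Consequently, multiplication by $u^k$ provides an $\mathcal{N}$-equivariant isomorphism of sheaves of $\mathcal{K}^\ast_{\T \ltimes \LbG}(\LM)$-modules
\[ {}^k \mathcal{K}^\ast_{\T \ltimes \LbG}(\LM) \;\cong\; \mathcal{K}^\ast_{\T \ltimes \LbG}(\LM) \otimes_{\mathcal{O}(\mathfrak{h} \times \Sigma_\C)} \mathcal{L}^{-k}. \]
Pushing forward along $\zeta$ and taking $(\pi_1(\Tr) \oplus \pi_1(\Tr))$-invariants then descends this to the corresponding $\W(\Gr)$-equivariant isomorphism of sheaves of $\mathcal{G}^\ast(\M)$-modules over $\mathcal{O}(\mathcal{E}_{\Tr})$, proving the claim.

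The main obstacle, as I see it, is essentially bookkeeping: one must verify that the twisted $\mathcal{N}$-action on $u^k$ matches the $\mathcal{N}$-action on $\mathcal{L}^{-k}$ on the nose, including the quadratic correction $\pi i \tau \langle \beta, \beta \rangle$ and the sign convention that produces $\mathcal{L}^{-k}$ rather than $\mathcal{L}^{+k}$. Once these transformation rules are reconciled, the remainder of the argument is formal: both sheaves are built from the same underlying $(\T \times \Tr)$-equivariant $\K$-theoretic data via the completed tensor product with $\mathcal{O}(\mathfrak{h} \times \Sigma_\C)$, and tensoring with the locally free rank-one sheaf $\mathcal{L}^{-k}$ commutes with both this completion and with taking $(\pi_1(\Tr) \oplus \pi_1(\Tr))$-invariants.
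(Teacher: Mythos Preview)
Your proposal is correct and follows essentially the same approach as the paper: both arguments hinge on the fact that the central character $u^k$ furnishes a canonical generator of dominant $\K$-theory over ordinary equivariant $\K$-theory, and that under $\chi$ this generator becomes the line bundle $\mathcal{L}^{-k}$. The only organizational difference is that the paper first isolates the case $\M = \mathrm{pt}$ to obtain ${}^k\mathcal{G}(\ast) \cong \mathcal{L}^{-k}$ and then pulls back along $\M \to \mathrm{pt}$ and extends via the $\mathcal{G}^\ast(\M)$-module structure, whereas you establish the rank-one freeness directly for general $\mbox{Y}$ at the torus level before sheafifying; the bookkeeping you flag (matching the $\mathcal{N}$-action on $u^k$ with that on $\mathcal{L}^{-k}$) is exactly the content the paper suppresses as ``easy to see'' and ``left to the reader.''
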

\begin{proof}
Since $u$ is the central character, it is easy to see that in cohomological degree zero, ${}^k \mathcal{G}(\ast)$ is canonically isomorphic to $\mathcal{L}^{-k}$. In particular, we have a canonical inclusion: $\mathcal{L}^{-k} \hookrightarrow {}^k \mathcal{G}^0(\M)$ for any $\M$ induced by the projection map $\M \rightarrow pt$. Extending with the module structure over $\mathcal{G}^\ast(\M)$ gives us the isomorphism we seek. Details are straightforward and are left to the reader. 
\end{proof}

\noindent
It is of interest to explore the structure of the space of invariant sections of ${}^k \mathcal{G}^\ast(\M)$. For the case $\M = pt$, the previous theorem identifies this space of sections with the space of $\W(\Gr)$-invariant sections of $\mathcal{L}^{-k}$. To relate these sections to familiar objects, we need: 

\medskip
\begin{claim}
The space of sections of $\mathcal{L}^{-k}$ can be identified with holomorphic functions $\varphi$ on the space $\mathfrak{h} \times \Sigma_\C \times \C$ with the following transformation property for all $\beta \in \pi_1(\Tr)$: 
\[ \varphi (\tau, h + \beta, z) = \varphi (\tau, h, z),  \quad \quad \varphi( \tau, h, z) =  \mbox{exp}(2 \pi i k  \langle \beta, h \rangle + \pi i k \tau \langle \beta, \beta \rangle) \, \varphi(\tau, h + \tau \beta, z). \]
In addition, the function is homogeneous of degree $k$ in the variable $z$:
\[ \varphi(\tau, h, u z) = u^k \varphi(\tau, h, z). \]
\end{claim}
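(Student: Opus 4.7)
The plan is to reduce the claim to a direct unpacking of the $\mathcal{N}$-action on $\mathfrak{h} \times \Sigma_\C \times \C$ given earlier in the section, by using the standard correspondence between sections of the dual of a line bundle and homogeneous functions on its total space.

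First, I would identify holomorphic sections of $\mathcal{L}^{-k}$ over $\mathfrak{h} \times \Sigma_\C$ with holomorphic functions $\varphi : \mathfrak{h} \times \Sigma_\C \times \C \to \C$ that are homogeneous of degree $k$ in the fiber coordinate $z$; explicitly, a section $F(\tau, h)$ corresponds to the function $\varphi(\tau, h, z) = z^k \, F(\tau, h)$ on the total space of $\mathcal{L}$. This already accounts for the third identity in the statement.

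Next, I would translate ``section of $\mathcal{L}^{-k}$ over the quotient $\mathcal{E}_{\Tr}$'' into $(\pi_1(\Tr) \oplus \pi_1(\Tr))$-invariance of $\varphi$ on the total space of $\mathcal{L}$, and then unpack this invariance on the two sets of generators separately. For $(0 \oplus \beta)$ the action fixes $z$ and sends $h$ to $h + \beta$ (the exponential cocycle factor reducing to $1$), so invariance gives $\varphi(\tau, h + \beta, z) = \varphi(\tau, h, z)$, which is the first formula. For $(\beta \oplus 0)$ the action sends $(\tau, h, z)$ to $(\tau, h + \tau \beta, \, c \, z)$ with $c = \exp(2\pi i \langle \beta, h \rangle + \pi i \tau \langle \beta, \beta \rangle)$, so invariance reads $\varphi(\tau, h + \tau\beta, c z) = \varphi(\tau, h, z)$; pulling $c$ out of the third slot using degree-$k$ homogeneity and then solving for $\varphi(\tau, h, z)$ yields the second formula.

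The converse direction---that any holomorphic $\varphi$ satisfying all three conditions descends to a genuine section of $\mathcal{L}^{-k}$ over $\mathcal{E}_{\Tr}$---is immediate, since the two transformation laws are exactly invariance under the chosen generators of $\pi_1(\Tr) \oplus \pi_1(\Tr)$. I do not expect any real obstacle here; the only point requiring care is bookkeeping, namely ensuring that the scalar $c^k$ produced by the homogeneity appears on the correct side of the invariance relation. This is fixed once one commits to the convention identifying $\mathcal{L}^{-k}$ with degree-$k$ homogeneous functions on $\mathcal{L}$, and the claim then becomes a transcription of the $\mathcal{N}$-action formula in the definition of $\mathcal{L}$.
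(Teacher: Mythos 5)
Your proposal is correct and takes the same route as the paper, whose proof is just a one-sentence appeal to the standard identification of sections of an associated line bundle with fiberwise-homogeneous functions on the total space of the dual; you simply carry out that identification explicitly, and your unpacking of the two lattice generators against the stated $\mathcal{N}$-action formula reproduces both transformation laws with the correct signs and exponents.
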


\begin{proof}
These equalities correspond to the standard identification of sections of lines bundles that are obtained via an associated bundle construction (as in the case of $\mathcal{L}^{-k}$), with functions on the total space of the dual bundle. 
\end{proof}

\noindent
Holomorphic functions that satisfy the conditions above are called theta functions of degree $k$. The following corollary is essentially the content of \cite{K1}(Chap. 13):

\medskip
\begin{corr} \label{rep LG}
In cohomological degree zero, the space of $\W(\Gr)$-invariant global sections of ${}^k \mathcal{G}^0(pt)$ is isomorphic to the vector space generated by the $\W(\Gr)$-invariant theta functions of degree $k$ (with respect to the positive definite quadratic form $\langle \, , \, \rangle $ on $\pi_1(\Tr)$). In particular, this space is finite dimensional and has a basis given by the characters of the irreducible level $k$ representations of $\T \ltimes \LbG$. 
\end{corr}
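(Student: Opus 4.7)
The plan is to reduce the statement to two ingredients already developed in the paper, and then invoke the classical theory of affine Lie algebras for the identification with characters.

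First I would specialize the preceding theorem to $\M = pt$. For $\M = pt$ we have $\LM = pt$, so the underlying $\Tr$-equivariant $\K$-theory is just $\Rep(\T \times \Tr)$, and the construction of $\mathcal{K}^\ast_{\T \ltimes \LbG}(\LM)$ (via $\chi$-extension) collapses to $\mathcal{O}(\mathfrak{h} \times \Sigma_\C)$ in degree zero. Passing to $(\pi_1(\Tr) \oplus \pi_1(\Tr))$-invariants gives $\mathcal{G}^0(pt) = \mathcal{O}(\mathcal{E}_{\Tr})$, and the preceding theorem then identifies ${}^k\mathcal{G}^0(pt)$ with the line bundle $\mathcal{L}^{-k}$ on $\mathcal{E}_{\Tr}$, as a $\W(\Gr)$-equivariant $\mathcal{O}(\mathcal{E}_{\Tr})$-module.

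Next I would apply the preceding claim. It asserts that global sections of $\mathcal{L}^{-k}$ correspond precisely to holomorphic functions $\varphi$ on $\mathfrak{h} \times \Sigma_\C$ (after using homogeneity in the fiber variable $z$ to eliminate it) obeying the quasi-periodicity
\[ \varphi(\tau, h + \beta) = \varphi(\tau, h), \qquad \varphi(\tau, h) = \exp\!\bigl(2\pi i k \langle \beta, h\rangle + \pi i k \tau \langle \beta,\beta\rangle\bigr)\,\varphi(\tau, h + \tau \beta) \]
for $\beta \in \pi_1(\Tr)$. These are exactly the degree $k$ theta functions attached to the positive-definite form $\langle\,,\,\rangle$. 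Taking $\W(\Gr)$-invariants gives, by definition, the $\W(\Gr)$-invariant level $k$ theta functions, proving the first assertion of the corollary.

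Finally, for the identification with characters and for finite-dimensionality, I would cite Chapter 13 of \cite{K1}: the Weyl--Kac character formula realizes the normalized character of every irreducible integrable highest weight module of $\T \ltimes \LbG$ at level $k$ as a $\W(\Gr)$-invariant theta function of degree $k$, and the classical result of Kac is that these characters form a basis of that space. Finite-dimensionality is automatic because the dominant integral weights of level $k > 0$ form a finite set (the level $k$ alcove contains finitely many lattice points). The main (and essentially only non-formal) obstacle is this last step, but it is the content of Kac's theorem and can be cited rather than re-derived; the first two steps are purely a bookkeeping reduction of the sheaf-theoretic definition to the classical picture.
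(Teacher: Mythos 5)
Your proposal is correct and follows essentially the same route as the paper: specialize the rank-one-module theorem to $\M = pt$ to identify invariant sections of ${}^k\mathcal{G}^0(pt)$ with $\W(\Gr)$-invariant sections of $\mathcal{L}^{-k}$, use the claim to read these as degree $k$ theta functions, and cite Kac (Chapter 13) for the basis by characters of level $k$ irreducibles. The paper itself gives no further argument beyond this assembly, so your write-up is, if anything, slightly more explicit.
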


\medskip
\begin{remark}\label{geometric}
The geometric interpretation of the above corollary is straight forward: Given a level $k$ positive energy irreducible representation $V$ of $\T \ltimes \LbG$, it is well known that the action of the Lie algebra of $\T \ltimes \LbG$ on $V$ can be complexified. Now recall form remark \ref{affine} that $i \R_+ \times \Sigma \times \R$ is a subspace of the Lie algebra of $\T \ltimes \LbG$. Hence, for each point : $(\tau, h, z) \in \mathfrak{h} \times \Sigma_\C \times \C$, one has an operator $\psi(\tau, h, z)$ on $V$, which is homogeneous of degree $k$ in the variable $z$ and factors through the complexified torus $\T_\C \times \Tr_\C \times \, \C^\ast$. The operators $\psi(\tau, h, z)$ preserve each (finite dimensional) $\T$-eigenspace of $V$. It follows that $\psi(\tau, h, z)$ gives rise to a nested family of operators, whose trace converges to the germ of a holomorphic function on $\mathfrak{h} \times \Sigma_\C \times \C$, at the point $(\tau, h, z)$. This is precisely an element of the stalk of the sheaf ${}^k \mathcal{K}_{\T \ltimes \LbG}(pt)_{(\tau, h)}$. In this manner each irreducible positive energy level $k$ representation $V$ gives rise to a section of ${}^k \mathcal{K}_{\T \ltimes \LbG}(pt)$. This construction also allows one to identify the image of ${}^k \mathcal{K}_{\T \ltimes \LbG}(pt)$ inside ${}^k \mathcal{K}_{\T \ltimes \LbG}(\LM)$ for any $\Gr$-space $\M$. 
\end{remark}

\medskip
\begin{remark}\label{main1}
In \cite{G}, Grojnowski constructs a sheaf which is now known as Grojnowski's equivariant elliptic cohomology. What the author finds incredible is that this sheaf is constructed in an a-priori fashion starting with the stalks that are prescribed to be as our corollary \ref{walls} suggests. Grojnowski's construction uses equivariant singular cohomology instead of $\K$-theory, but he points out that the stalks are to be seen under the lens of the Chern character. There have been several later versions of Grojnowski's construction by other authors (see \cite{AB}). It appears very likely that our sheaf ${}^k \mathcal{G}^\ast(\M)$ is closely related to Grojnowski's construction, though we have not explored the details (see section \ref{remarks}). Indeed, remarks in \cite{G} suggest that Grojnowski had something like our framework in mind when constructing his equivariant elliptic cohomology.  As the previous remark suggests, the merit of using a geometric description as we have done, is that it allows one to motivate the constructions (see remarks \ref{complexification},\ref{cb},\ref{geometric}) and use positive energy representations to directly construct elements in equivariant elliptic cohomology. 
 \end{remark}

\section{Level $k$ representations of $\T \ltimes \LbT$ and ${}^k \mathcal{G}(\GT)$:} \label{LGT}

\medskip
\noindent
In this section we explore the structure of the sheaf ${}^k \mathcal{G}(\GT)$. 

\smallskip
\noindent
Let $\mathcal{K}^\ast_{\T \times \Tr}((\T \ltimes \LG)/(\T \times \Tr))$ denote the sheaf of $\mathcal{O}(\mathfrak{h} \times \Sigma_\C)$-modules representing the equivariant $\K$-theory: $\K^\ast_{\T \times \Tr}((\T \ltimes \LG)/(\T \times \Tr))$ as constructed earlier. In other words, we define

\[ \mathcal{K}^\ast_{\T \times \Tr}((\T \ltimes \LG)/(\T \times \Tr))_{(\tau, h)} :=  \K^\ast_{\T \times \Tr}((\T \ltimes \LG)/(\T \times \Tr)) \hat{\otimes}_{\chi} \mathcal{O}(\mathfrak{h} \times \Sigma_\C)_{(\tau, h)}. \]
Notice that the space $(\T \ltimes \LT)/(\T \times \Tr)$ supports an $\T \ltimes \LG$-equivariant right action of $\tilde{\W}(\Gr)$ given by: 
\[  w \ast (g \, \T \times \Tr) = g w \, \T \times \Tr. \]
In particular, the $\mathcal{O}(\mathfrak{h} \times \Sigma_\C)$-module $\mathcal{K}^\ast_{\T \times \Tr}((\T \ltimes \LG)/(\T \times \Tr))$ admits another action of the affine Weyl group: $\tilde{\W}(\Gr)$ which preserves each stalk and commutes with the action of the group $\mathcal{N}$. So as to not confuse this action with the action induced via $\mathcal{N}$, we shall refer to this action as the right action: $\tilde{\W}(\Gr)_r$. 
Let $\mathcal{K}^\ast_{\T \times \Tr}((\T \ltimes \LG)/(\T \times \Tr))^{\pi_1(\Tr)_r}$ denote the $\mathcal{N}$-equivariant sheaf of invariants under the subgroup $\pi_1(\Tr) \subseteq \tilde{\W}(\Gr)_r$. Notice that $\mathcal{K}^\ast_{\T \times \Tr}((\T \ltimes \LG)/(\T \times \Tr))^{\pi_1(\Tr)_r}$ admits a residual action of $\W(\Gr)_r$ that commutes with all the present structure. With this observation in place, we have:

\bigskip
\begin{thm} \label{GT}
The $\mathcal{N} \times \W(\Gr)_r$-equivariant sheaf ${}^k \mathcal{K}^\ast_{\T \ltimes \LbG}(\Lgt)$ may be described as: 
\[  {}^k \mathcal{K}^\ast_{\T \ltimes \LbG}(\Lgt) =  \mathcal{K}^\ast_{\T \times \Tr}((\T \ltimes \LG)/(\T \times \Tr))^{\pi_1(\Tr)_r} \otimes \mathcal{L}^{-k} = {}^k \mathcal{K}^\ast_{\T \times \Tr}((\T \ltimes \LG)/(\T \times \Tr))^{\pi_1(\Tr)_r}. \] 
Furthermore, the $\W(\Gr)_r$-invariant sub-sheaf of ${}^k \mathcal{K}^\ast_{\T \ltimes \LbG}(\Lgt)$ can be identified with the image of ${}^k \mathcal{K}^\ast_{\T \ltimes \LbG}(pt)$
induced by the projection map from $\GT$ to a point.  
\end{thm}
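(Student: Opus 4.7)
The plan is to identify $\Lgt$ as a homogeneous $(\T \ltimes \LG)$-space and then compute its dominant $\K$-theory via a simpler model. Since $\Gr$ is simply connected, the flag variety $\GT$ is simply connected, so $\pi_0(\Lgt) = \pi_1(\GT) = 0$ and the pointwise left action of $\LG$ on $\Lgt$ is transitive with stabilizer of the constant loop at $e\Tr$ equal to $\LT$. Because the rotation group $\T$ fixes constant loops, extending to $\T \ltimes \LG$ preserves this stabilizer, yielding $\Lgt \cong (\T \ltimes \LG)/(\T \ltimes \LT)$ as $(\T \ltimes \LG)$-spaces.

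Now consider the projection
\[ p: (\T \ltimes \LG)/(\T \times \Tr) \longrightarrow (\T \ltimes \LG)/(\T \ltimes \LT) = \Lgt, \]
a fiber bundle with fiber $(\T \ltimes \LT)/(\T \times \Tr) \cong \LT/\Tr \cong \Omega\Tr$. Decomposing $\Omega\Tr = \pi_1(\Tr) \times \Omega_0\Tr$ and contracting the identity component $\Omega_0\Tr$ via the exponential map of $\Lie(\Tr)$ produces a $(\T \times \Tr)$-equivariant deformation retract onto the discrete locus, so that $(\T \ltimes \LG)/(\T \times \Tr) \simeq \pi_1(\Tr) \times \Lgt$ as $(\T \times \Tr)$-spaces. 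Under this equivalence the right $\pi_1(\Tr) \subset \tilde{\W}(\Gr)$-action translates the discrete factor and acts trivially on $\Lgt$, because right multiplication by elements of $\LT$ descends trivially to $\LG/\LT$. Taking $\pi_1(\Tr)_r$-invariants in $\T \times \Tr$-equivariant $\K$-theory therefore gives
\[ \mathcal{K}^\ast_{\T \times \Tr}((\T \ltimes \LG)/(\T \times \Tr))^{\pi_1(\Tr)_r} \cong \mathcal{K}^\ast_{\T \times \Tr}(\Lgt). \]
Tensoring by $\mathcal{L}^{-k}$ then converts untwisted equivariant $\K$-theory to dominant $\K$-theory at level $k$ (the line bundle encoding the central $S^1$-action via the character map $\chi$ of Definition~\ref{chi}); combined with the rank-one description of the previous theorem, this establishes both displayed equalities of the first part.

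For the second part, the projection $\Lgt \to pt$ is tautologically $\W(\Gr)_r$-equivariant, so the pullback ${}^k \mathcal{K}^\ast_{\T \ltimes \LbG}(pt) \to {}^k \mathcal{K}^\ast_{\T \ltimes \LbG}(\Lgt)$ factors through the $\W(\Gr)_r$-invariant sub-sheaf. For the reverse inclusion, combine the $\pi_1(\Tr)_r$-invariance above with the $\W(\Gr)_r$-invariance to obtain full $\tilde{\W}(\Gr)_r$-invariants; these reduce by a free-orbit argument to $\mathcal{K}^\ast_{\T \times \Tr}((\T \ltimes \LG)/\FT)$, and the loop-group analog of $\Rep(\Tr)^{\W(\Gr)} = \Rep(\Gr)$---realized as the identification of $\tilde{\W}(\Gr)$-invariant theta functions of degree $k$ with characters of level $k$ positive-energy representations of $\T \ltimes \LbG$ (Corollary~\ref{rep LG})---matches this invariant sub-sheaf with the image of ${}^k \mathcal{K}^\ast_{\T \ltimes \LbG}(pt) = \mathcal{L}^{-k}$. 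The main technical obstacle is the equivariant deformation retract: one must verify that the contraction of $\Omega_0\Tr$ is simultaneously $(\T \times \Tr)$-equivariant for the left action and compatible with the right $\pi_1(\Tr)$-translation, which the abelian structure on $\Omega_0\Tr$ and the translational nature of both actions make possible.
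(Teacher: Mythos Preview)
Your global homotopy argument has a genuine gap at its central step. You claim that contracting $\Omega_0\Tr$ fiberwise yields a $(\T \times \Tr)$-equivariant equivalence $(\T \ltimes \LG)/(\T \times \Tr) \simeq \pi_1(\Tr) \times \Lgt$. But the left-hand side is \emph{connected}: both $\LG$ (since $\Gr$ is simply connected) and $\Tr$ are connected, hence so is the quotient. The right-hand side has $|\pi_1(\Tr)|$ components. Concretely, in the fibration $\LT/\Tr \to \LG/\Tr \to \LG/\LT$ the connecting map $\pi_1(\Lgt)\cong\pi_2(\GT)\cong\pi_1(\Tr) \to \pi_0(\Omega\Tr)=\pi_1(\Tr)$ is an isomorphism, so the monodromy permutes the fiber components transitively and the bundle is not a product. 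Consequently your identification $\mathcal{K}^\ast_{\T \times \Tr}((\T \ltimes \LG)/(\T \times \Tr))^{\pi_1(\Tr)_r} \cong \mathcal{K}^\ast_{\T \times \Tr}(\Lgt)$ is not justified by the argument given; for a nontrivial free $\pi_1(\Tr)$-cover there is no reason for $\K$-theory invariants to agree with $\K$-theory of the quotient.

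The paper avoids this by working \emph{stalkwise} via the fixed-point theorem (Theorem~\ref{fixed} and Corollary~\ref{walls}): at $(\tau,h)$ with $-h_1$ in the interior of $\Delta_I$, the relevant fixed loci are the \emph{finite} spaces $(\T \ltimes \Hg_I)\times_{\No_I}\W$ (for $\Lgt$) and $(\T \ltimes \Hg_I)\times_{\No_I}\tilde{\W}$ (for $(\T \ltimes \LG)/(\T \times \Tr)$). Writing $\tilde{\W}\ni w\beta \mapsto (w,\beta)\in \W\times\pi_1(\Tr)$ shows that \emph{on these fixed loci} the product decomposition you want does hold, compatibly with the left $\No_I$-action and the right $\pi_1(\Tr)_r$-action; this is what makes the $\pi_1(\Tr)_r$-invariants match. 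So your intuition is correct after localization, but the global statement you assert is false, and localization is precisely the missing ingredient. For the second assertion the paper likewise argues stalkwise, using the injection $\iota^\ast$ of \cite{HHH}, \cite{KK} into $\prod_{w\in\W}\mathcal{L}^{-k}$ induced by restriction to the discrete $\T\times\Tr$-fixed set; your appeal to Corollary~\ref{rep LG} addresses only global sections, not the sheaf-level identification the theorem asserts.
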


\begin{proof}
Using the results and notation of corollary \ref{walls}, we recall that for a point $(\tau, h)$, the stalk ${}^k \mathcal{K}^\ast_{\T \ltimes \LbG}(\Lgt)_{(\tau, h)}$ is localized on a subspace of $(\GT)^{exp(2 \pi i h_1)}$. Now, using the action of $\mathcal{N}$ (which preserves the isomorphism type of the stalks), we may assume that $-h_1$ belongs to the affine alcove $\Delta$. Assume that $-h_1$ belongs to the interior of a wall $\Delta_I$. It follows that $(\GT)^{exp(2\pi i h_1)} = \Hg_I \times_{\No_I} \W$, where we use $\tilde{\W}$, $\W$ and $\W_I$ to denote $\tilde{\W}(\Gr)$, $\W(\Gr)$ and $\W_I(\Gr)$ resp., and $\No_I$ denotes the normalizer of the maximal torus of $\Hg_I$. In particular, we have: 
\[ {}^k \mathcal{K}^\ast_{\T \ltimes \LbG}(\Lgt)_{(\tau, h)} = \K^\ast_{\T \times \Tr}((\T \ltimes \Hg_I) \times_{\No_I} \W) \otimes_{\chi} \mathcal{L}^{-k}_{(\tau, h)},  \]
where the action of $\T$ on $\Hg_I$ is via the map $t \mapsto \mbox{exp}(2 \pi i t h_1)$. However, it is easy to see that one may choose the standard action of $\T$ on $\Hg_I$ without changing the equivariant $\K$-theory (since any two actions differ by a map from $\T$ to the center of $\Hg_I$). Now we have the equality: 
\[ \K^\ast_{\T \times \Tr}((\T \ltimes \Hg_I) \times_{\No_I} \W) = \K^\ast_{\T \times \Tr}((\T \ltimes \Hg_I) \times_{\No_I} \tilde{\W})^{\pi_1(\Tr)_r}. \]
In addition, the space $(\T \ltimes \Hg_I) \times_{\No_I} \tilde{\W}$ is easily seen to be identical to the fixed point space: $(\T \ltimes \LG/(\T \times \Tr))^{exp(2\pi i h_1)}$. Unraveling this sequence of equalities gives rise to a natural isomorphism: 
\[  {}^k \mathcal{K}^\ast_{\T \ltimes \LbG}(\Lgt)_{(\tau, h)} \cong  \mathcal{K}^\ast_{\T \times \Tr}((\T \ltimes \LG)/(\T \times \Tr))^{\pi_1(\Tr)_r} \otimes \mathcal{L}^{-k}_{(\tau, h)}. \]
It remains to explore the $\W_r$-invariant sub-sheaf. For this, consider an arbitrary proper subset $I \subset \{0,1,\ldots,n\}$, and let $\iota_I$ denote the inclusion of fixed points: 
\[ \iota_I : \tilde{\W} \subseteq (\T \ltimes \Hg_I) \times_{\No_I} \tilde{\W}. \]
The maps $\iota_I$ are compatible in $I$ and hence by \cite{HHH} \cite{KK}, give rise to an injection of $\mathcal{N} \times \W_r$-equivariant sheaves: 
\[  \iota^\ast : \mathcal{K}^\ast_{\T \times \Tr}((\T \ltimes \LG)/(\T \times \Tr))^{\pi_1(\Tr)_r} \otimes \mathcal{L}^{-k} \longrightarrow \{ \, \prod_{w \in \tilde{\W}} \mathcal{L}^{-k} \,  \}^{\pi_1(\Tr)_r} =  \prod_{w \in \W} \mathcal{L}^{-k}, \]
where the co-domain is endowed with the action of $\W_r$ induced by right permutations of the indexing set for the product. Taking invariants with respect to $\W_r$, leads easily to the proof of the statement we required. 
\end{proof}

\medskip
\noindent
Next we consider $\mathcal{N}$-invariant global sections of the sheaf ${}^k \mathcal{K}^\ast_{\T \times \Tr}((\T \ltimes \LG)/(\T \times \Tr))^{\pi_1(\Tr)_r}$. This space will be expresed in terms of certain classes that can formally be expressed in terms of Euler classes of certain equivariant line bundles on $(\T \ltimes \LG)/(\T \times \Tr)$. In addition, they are expressible in terms of theta functions. For this reason, we call these classes Euler-Theta classes. For the sake of brevity, we will stick with the notation $\W$ and $\tilde{\W}$ for $\W(\Gr)$ and $\tilde{\W}(\Gr)$ resp. We recall our convention to use the notation $(w \ast e^\lambda)$ to denote the action of $w \in \tilde{\W}$ on the character $e^\lambda$ of $\T \times \Tr \times \, S^1$ via the action of $\mathcal{N}$ (which, we recall, acts on characters along its projection onto $\tilde{\W}$). 

\medskip
\noindent
Now given a character $e^{\lambda}$ of $\T \times \Tr \times \, S^1$ of level $k$, consider the formal theta character: 
\[ \theta_\lambda = \sum_{\beta \in \pi_1(\Tr)} e^{\beta \ast \lambda}. \]
By \cite{K1} (Ch. 12), $\theta_\lambda$ can be seen to be a holomorphic section of the line bundle $\mathcal{L}^{-k}$. By construction, $\theta_\lambda$ is invariant under the action of $(\pi_1(\Tr) \oplus \pi_1(\Tr)) \subset \mathcal{N}$. This leads us to: 

\begin{defn}
Given a level $k$ character $\lambda$ of $\T \times \Tr \times \, S^1$, we define the Euler-Theta class $e(\lambda)$: 
\[ e(\lambda) \in \{ \prod_{w \in \tilde{\W}}  \mathcal{O}(\mathcal{L}^{-k}) \, \nu_w \}^{\pi_1(\Tr)_r}, \quad \quad e(\lambda) = \prod_{w \in \tilde{\W}} (w \ast \theta_\lambda) \,  \nu_w, \]
where $\nu_w$ is a place holder for the factor corresponding to the element $w \in \tilde{\W}$. 
\end{defn} 

\bigskip
\noindent
Consider the injection of $\mathcal{N}$-invariant global sections induced by the inclusion of $\T \times \Tr$-fixed points: 
\[ \Gamma_{\mathcal{N}} \iota^\ast  : \Gamma_{\mathcal{N}} \, {}^k \mathcal{K}^\ast_{\T \times \Tr}((\T \ltimes \LG)/(\T \times \Tr))^{\pi_1(\Tr)_r} \longrightarrow  \{ \prod_{w \in \tilde{\W}} \mathcal{O}(\mathcal{L}^{-k}) \, \nu_w \}^{\pi_1(\Tr)_r} = \prod_{w \in \W} \mathcal{O}(\mathcal{L}^{-k}). \]

\begin{thm} \label{rep LT}
The image of $\Gamma_{\mathcal{N}} \iota^\ast$ is a finite dimensional vector space spanned by $e(\lambda)$, where $\lambda$ ranges over the equivalence class of characters of level $k$ under the action of $\pi_1(\Tr)$ induced via $\mathcal{N}$. In particular, by \cite{K1}, the space of $\mathcal{N}$-invariant global sections of ${}^k \mathcal{K}^\ast_{\T \times \Tr}((\T \ltimes \LG)/(\T \times \Tr))^{\pi_1(\Tr)_r}$ is isomorphic to the vector space spanned by all level $k$ representations of $\T \ltimes \LbT$, under the induced central extension $\T \ltimes \LbT \subset \T \ltimes \LbG$.
\end{thm}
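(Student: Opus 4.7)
The plan is to reduce the statement to a concrete identification of the $\mathcal{N}$-invariant sections with theta functions of degree $k$ in the sense of \cite{K1}.

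First, by Theorem \ref{GT} and the injectivity of $\iota^\ast$ via fixed-point localization (following \cite{HHH,KK}), the question reduces to classifying the $\mathcal{N} \times \pi_1(\Tr)_r$-invariant elements of $\prod_{w \in \tilde{\W}} \mathcal{O}(\mathcal{L}^{-k})\,\nu_w$ and exhibiting an explicit basis. I will then treat the two invariance constraints separately.

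Since the projection $\mathcal{N} \to \tilde{\W}$ is surjective with kernel the second summand $\pi_1(\Tr)_2 \subset \pi_1(\Tr) \oplus \pi_1(\Tr)$, an $\mathcal{N}$-invariant tuple $\{s_w\}$ is entirely determined by its $\nu_e$-component $s_e$ via $s_{\pi(g)} = g \ast s_e$. The consistency condition across the kernel $\pi_1(\Tr)_2$ demands $s_e(\tau, h + \beta_2, z) = s_e(\tau, h, z)$, which is automatic for holomorphic sections of $\mathcal{L}^{-k}$ by the first transformation law of the claim preceding Corollary \ref{rep LG}. Now, identifying $\pi_1(\Tr)_r \subset \tilde{\W}_r$ with the lattice $\pi_1(\Tr) \subset \tilde{\W}$ and lifting each $\beta$ to $\beta_1 \in \pi_1(\Tr)_1 \subset \mathcal{N}$ (the \emph{first} summand), $\pi_1(\Tr)_r$-invariance at $\nu_e$ reads $\beta_1 \ast s_e = s_e$, which unwinds to
\begin{equation*}
s_e(\tau, h, z) \;=\; \exp\bigl(2 \pi i k \langle \beta, h \rangle + \pi i k \tau \langle \beta, \beta \rangle\bigr)\, s_e(\tau, h + \tau \beta, z).
\end{equation*}
Combined with the first automatic law and the degree-$k$ homogeneity in $z$, this is precisely the definition of a theta function of degree $k$. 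Conversely, starting from any theta function and setting $s_w := w \ast s_e$ reproduces a valid bi-invariant tuple, so the correspondence is bijective.

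Finally, \cite{K1}(Ch.\ 12) provides a basis $\{\theta_\lambda\}$ for the space of theta functions of degree $k$, indexed by $\pi_1(\Tr)$-orbits of level $k$ characters $\lambda$ of $\T \times \Tr \times S^1$. Under the reconstruction $s_e \mapsto \sum_{w \in \tilde{\W}}(w \ast s_e)\,\nu_w$, this basis is sent precisely to the Euler-Theta classes $e(\lambda)$, establishing the first assertion. The second assertion then follows from \cite{K1}(Ch.\ 13), where these theta functions are identified with the characters of the irreducible level $k$ representations of $\T \ltimes \LbT$ under the central extension induced from $\T \ltimes \LbG$.

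The main obstacle will be the careful bookkeeping of the three copies of $\pi_1(\Tr)$ at play: the two summands inside $\mathcal{N}$ and the right copy $\pi_1(\Tr)_r \subset \tilde{\W}_r$. The decisive observation is that only the first summand acts on sections of $\mathcal{L}^{-k}$ with a nontrivial cocycle in $z$, while the second summand acts by pure translation (already absorbed into the definition of sections of $\mathcal{L}^{-k}$); this asymmetry is what makes the lift of $\pi_1(\Tr)_r$ to the first summand turn right-invariance into the theta transformation law, rather than into a triviality.
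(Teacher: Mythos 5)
Your classification of the $\mathcal{N}\times\pi_1(\Tr)_r$-invariant tuples in $\prod_{w\in\tilde{\W}}\mathcal{O}(\mathcal{L}^{-k})\,\nu_w$ matches the second half of the paper's argument: invariance under the kernel $\{0\}\oplus\pi_1(\Tr)$ of $\mathcal{N}\to\tilde{\W}$ plus the interplay between the first summand and $\pi_1(\Tr)_r$ forces the component $s_e$ to be fixed by the full lattice, hence a theta function, and the tuple is then $\sum_\lambda c_\lambda\, e(\lambda)$. This correctly establishes the containment $\mathrm{Im}(\Gamma_{\mathcal{N}}\iota^\ast)\subseteq \mathrm{span}\{e(\lambda)\}$.

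However, your opening reduction --- that ``by Theorem \ref{GT} and the injectivity of $\iota^\ast$ \ldots the question reduces to classifying the $\mathcal{N}\times\pi_1(\Tr)_r$-invariant elements'' of the codomain --- is where a genuine gap enters. Injectivity of $\iota^\ast$ gives only the containment above; it says nothing about which invariant tuples are actually hit. The image of $\iota^\ast$ is a \emph{proper} subspace of the product, cut out by the divisibility conditions of \cite{HHH} and \cite{KK}: for $w=r_\alpha v$ reduced, the components must satisfy $(e^{\alpha}-1)\mid(\psi_w-\psi_v)$. Your reconstruction $s_e\mapsto\sum_{w}(w\ast s_e)\,\nu_w$ only produces an invariant element of the codomain; to conclude that $e(\lambda)$ lies in the image you must verify these divisibility relations for $\theta_\lambda$. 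This is the main technical content of the paper's proof: each term satisfies
$w\ast(\beta\ast e^\lambda)-v\ast(\beta\ast e^\lambda)=(e^{\alpha}-1)\,\varphi(\beta)$,
and the factors $\varphi(\beta)$ are dominated by $q^{\frac{k}{2}\langle\beta,\beta\rangle}$, so the sum over $\beta\in\pi_1(\Tr)$ converges in $\mathcal{O}(\mathcal{L}^{-k})$ and exhibits $(w\ast\theta_\lambda)-(v\ast\theta_\lambda)$ as a multiple of $e^{\alpha}-1$. Note that one cannot sidestep this by observing that $\prod_w(w\ast e^\lambda)\,\nu_w$ is the restriction of an honest equivariant line bundle class: $e(\lambda)$ is an \emph{infinite} sum over the $\pi_1(\Tr)_r$-orbit of such classes, and the convergence of that sum inside the image is exactly what the estimate provides. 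Without this step the first assertion of the theorem (that the image equals, rather than is contained in, the span of the $e(\lambda)$) is not proved.
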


\begin{proof}
Given any level $k$ character $e^\lambda$, consider the expression of the form $\prod_{w \in \tilde{\W}} (w \ast e^{\lambda}) \, \nu_w$. It is easy to see that this is the image (under $\iota^\ast$) of the $\T \ltimes \LbG$-equivariant line bundle over $(\T \ltimes \LG)/(\T \times \Tr)$, induced by the weight $\lambda$. In particular, this element represents an $\mathcal{N}$-invariant global section of the bundle ${}^k \mathcal{K}^0_{\T \ltimes \LbG}((\T \ltimes \LG)/(\T \times \Tr))$. Taking the orbit of this element under the action of $\pi_1(\Tr)_r$ results in an $\mathcal{N} \times \pi_1(\Tr)_r$-invariant element whose factors are expressible as theta functions and therefore $e(\lambda)$ is a well defined $\mathcal{N}$-invariant element in the co-domain of the map $\Gamma_{\mathcal{N}} \iota^\ast$.  It remains to check that $e(\lambda)$ is in the image of $\Gamma_{\mathcal{N}} \iota^\ast$. For this we invoke results of \cite{HHH} that identify the image of $\Gamma_{\mathcal{N}} \iota^\ast$ as follows. Given a positive real root $\alpha$ of $\T \ltimes \LG$, let $r_\alpha \in \tilde{\W}$ denote the reflection corresponding to $\alpha$. Let $v,w$ be elements of $\tilde{\W}$ with the property that $w = r_\alpha v$ is a reduced expression. Then by \cite{HHH}, one needs to verify that $(w \ast \theta_\lambda) - (v \ast \theta_\lambda)$ is divisible by $e^{\alpha}-1$ in $\mathcal{O}(\mathcal{L}^{-k})$. To establish this fact, notice that for any $\beta \in \pi_1(\Tr)$, we have: 
\[ w \ast (\beta \ast e^\lambda) - v \ast (\beta \ast e^\lambda) = e^{v \ast (\lambda + k \beta^\ast)} q^{\lambda(\beta) + \frac{k}{2}\langle \beta, \beta \rangle}\{ e^{-v \ast (\lambda + k \beta^\ast)(h_\alpha) \, \alpha} - 1 \}. \]
For fixed $w, v, \lambda, \alpha$, the above expression may be factored in $\mathcal{O}(\mathcal{L}^{-k})$: 
\[ w \ast (\beta \ast e^\lambda) - v \ast (\beta \ast e^\lambda) = (e^{\alpha}-1) \, \varphi(\beta). \]
Notice that the elements $\varphi(\beta)$ are characters dominated by $q^{\frac{k}{2} \langle \beta, \beta \rangle}$ and therefore the sum over all $\beta$ converges to give us a well defined factorization in $\mathcal{O}(\mathcal{L}^{-k})$: 
\[  (w \ast \theta_\lambda) - (v \ast \theta_\lambda) = (e^{ \alpha}-1) \sum_{\beta \in \pi_1(\Tr)} \varphi(\beta). \]
This proves that the elements $e(\lambda)$ are in the image of $\Gamma_{\mathcal{N}} \iota^\ast$. Now let $n \in \mathcal{N}$ be an arbitrary element. The action of this element on a section $\psi$ of the form $\prod_{w \in \tilde{\W}} \psi_w \, \nu_w$ is given by: 
\[ n \ast (\prod_{w \in \tilde{\W}} \psi_w \, \nu_w) = \prod_{w \in \tilde{\W}} (n \ast \psi_w) \, \nu_{n \ast w}. \]
Hence a $\mathcal{N}$-invariant global section $\psi$ is determined by its factor $\psi_e$ corresponding to the unit $e \in \tilde{\W}$. If in addition, $\psi$ is $\pi_1(\Tr)_r$-invariant, we deduce that $\psi_e$ must be fixed by the action of the lattice $(\pi_1(\Tr) \oplus \pi_1(\Tr)) \subset \mathcal{N}$. Hence, $\psi_e$ is an arbitrary holomorphic section of the line bundle $\mathcal{L}^{-k}$ over $\mathcal{E}_{\Tr}$. These theta functions are known to be a vector space on a basis given by elements $\theta_\lambda$ with $\lambda$ ranging on the quotient space mentioned above. In addition, these theta functions index level $k$ representations of $\T \ltimes \LbT$ \cite{K1} \cite{PS}. 
\end{proof}

\medskip
\begin{remark}
We may construct elements in ${}^k \mathcal{K}^0_{\T \ltimes \LbT}(pt)$ along the lines described in remark \ref{geometric}. These may be induced up to elements in ${}^k \mathcal{K}^0_{\T \ltimes \LbG}(\LGT)$ using the fact that $\LGT = (\LbG)/(\LbT)$. The above theorem implies that this procedure of induction exhausts all elements of the space of global sections of ${}^k \mathcal{K}^\ast_{\T \ltimes \LbG}(\LGT)$.
\end{remark}

\bigskip
\noindent
We end this section with the example of a free $\Gr$-space $\M$: 

\medskip
\begin{thm} \label{G}
Let $\M$ be a free finite dimensional $\Gr$-space, and let $\K_{\T \times \Tr}^\ast(\M) = \K^\ast(\MT)[q^{\pm}]$ denote the $\W(\Gr)$-module given by the topological $\K$-theory of the homogeneous space $\MT$ with a trivial action of $\T$. Let $\mathcal{K}^\ast_{\T}(\MT)$ be the corresponding sheaf on $\T_\C \times \Tr_\C$. Then as $\mathcal{N}$-equivariant sheaves, we have an equivalence: 
\[ {}^k \mathcal{K}^\ast_{\T \ltimes \LbG}(\LM) =  \mathcal{K}_{\T}^\ast(\MT) \otimes_{\chi} \mathcal{L}^{-k}. \] 
Furthermore, ${}^k \mathcal{G}^\ast(\M)$ is supported along the zero section: $\mathfrak{h} \longrightarrow \mathcal{E}_{\Tr}$\footnote{These sheaves appear to not necessarily be induced from $\mathcal{O}(\mathfrak{h})$-modules}.
\end{thm}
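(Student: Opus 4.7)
The plan is to apply Theorem \ref{fixed} together with Corollary \ref{walls} stalk by stalk, exploiting the freeness of the $\Gr$-action to kill almost every stalk. Fix $(\tau, h) \in \mathfrak{h} \times \Sigma_\C$ and write $h = -\tau h_1 + h_2$ with $h_1, h_2 \in \Sigma$. By Theorem \ref{fixed} the stalk of ${}^k \mathcal{K}^\ast_{\T \ltimes \LbG}(\LM)$ at $(\tau,h)$ is computed from the ${\bf T}$-space $\LM^{\R(h)} \cap \LM^{\Z(h)}$, which by Corollary \ref{walls} is abstractly homeomorphic to $\M^{\langle \exp(2\pi i h_1),\, \exp(2\pi i h_2) \rangle}$. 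Since $\Gr$ acts freely on $\M$, this fixed-point set is empty unless both $\exp(2\pi i h_1)$ and $\exp(2\pi i h_2)$ equal the identity, i.e.\ unless $(h_1, h_2) \in \pi_1(\Tr) \oplus \pi_1(\Tr)$. The first step is thus to record that every stalk of ${}^k \mathcal{K}^\ast_{\T \ltimes \LbG}(\LM)$ outside this locus vanishes.

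On the surviving locus I would use the translation subgroup $\pi_1(\Tr) \oplus \pi_1(\Tr) \subset \mathcal{N}$, which acts on the base by $(\beta_1, \beta_2) \ast (\tau, -\tau h_1 + h_2) = (\tau, -\tau(h_1 - \beta_1) + (h_2 + \beta_2))$, to move any such $(h_1, h_2)$ to $(0,0)$. At the base point $(\tau, 0)$ the subgroup $\R(h)$ coincides with the rotation circle $\T$ while $\Z(h)$ is trivial, so $\mbox{Z}(\tau, 0)$ is simply $\M$ viewed as constant loops, carrying the original $\Tr$-action via $\Gr$ and a trivial $\T$-action. Since the $\Tr$-action is free,
\[ {}^k \K_{{\bf T}}(\M) \, \cong \, \K(\MT)[q^{\pm}] \cdot u^k, \]
with $q$ the rotation character and $u^k$ the generator of the level-$k$ central character. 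Feeding this into Theorem \ref{fixed} and observing that $u^k$ is a local frame of $\mathcal{L}^{-k}$, I obtain a stalkwise identification with $\mathcal{K}^\ast_\T(\MT) \otimes_{\chi} \mathcal{L}^{-k}$ at $(\tau, 0)$. Transporting by the $\mathcal{N}$-action then spreads this identification $\mathcal{N}$-equivariantly over all of $\mathfrak{h} \times \Sigma_\C$.

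For the support statement, recall that $\mathcal{E}_{\Tr} = (\mathfrak{h} \times \Sigma_\C)/(\pi_1(\Tr) \oplus \pi_1(\Tr))$ with the lattice acting by $(\beta_1, \beta_2) \cdot (\tau, h) = (\tau, h + \tau \beta_1 + \beta_2)$. The set of $h$ of the form $-\tau h_1 + h_2$ with $(h_1, h_2) \in \pi_1(\Tr)^2$ is precisely the lattice orbit of $0 \in \Sigma_\C$, whose image in $\mathcal{E}_{\Tr}$ is the zero section $\mathfrak{h} \hookrightarrow \mathcal{E}_{\Tr}$. Combined with the stalkwise vanishing from the first step this yields the claim about the support of ${}^k \mathcal{G}^\ast(\M)$.

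The main obstacle I expect is the cocycle bookkeeping: one must verify that the $\mathcal{N}$-equivariant gluing between the pointwise identifications on the surviving locus is governed by precisely the transformation rule $\beta \ast u = u\, e^{\beta^\ast} q^{\frac{1}{2}\langle \beta, \beta \rangle}$ recalled at the start of Section \ref{grojnowski}, which is the automorphy factor defining $\mathcal{L}^{-k}$. Once this cocycle comparison is carried out, the stalkwise isomorphisms assemble into an equivariant isomorphism of sheaves as required.
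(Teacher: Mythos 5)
Your proposal is correct and follows essentially the same route as the paper: localize each stalk via Theorem \ref{fixed} and Corollary \ref{walls} on $\M^{\langle \exp(2\pi i h_1),\,\exp(2\pi i h_2)\rangle}$, use freeness to conclude the stalk vanishes unless $(h_1,h_2)\in\pi_1(\Tr)\oplus\pi_1(\Tr)$, translate by the lattice in $\mathcal{N}$ to the point $(\tau,0)$ where the stalk becomes $\K^\ast(\MT)[q^{\pm}]\otimes_\chi\mathcal{L}^{-k}_{(\tau,0)}$, and note that the surviving locus is the lattice orbit of the zero section. Your extra attention to the automorphy factor $\beta\ast u = u\,e^{\beta^\ast}q^{\frac{1}{2}\langle\beta,\beta\rangle}$ in assembling the stalkwise identifications equivariantly is a detail the paper leaves implicit, but it does not change the argument.
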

\begin{proof}
Consider the stalk at a point $(\tau, h)$. Recall that we showed in corollary \ref{walls} that this stalk is localized on the space of $\M^{exp(2 \pi i h_1)} \cap \M^{exp(2\pi i h_2)}$. But since $\M$ is a free $\Gr$-space, the only way one may get a non-trivial stalk is if $h_1, h_2 \in \pi_1(\Tr)$. By using the action of the lattice in $\mathcal{N}$, we may assume that $h_1 = h_2 = 0$. It follows that ${}^k \mathcal{K}^\ast_{\T \ltimes \LbG}(\LM)$ is supported on the $\mathcal{N}$-orbit of $\mathfrak{h}$, and that it is given by constant loops if $h_1=h_2=0$:
\[ {}^k \mathcal{K}^\ast_{\T \ltimes \LbG}(\LM)_{(\tau, 0)} = \K^\ast_{\T \times \Tr}(\M) \otimes_{\chi} \mathcal{L}^{-k}_{(\tau, 0)} = \K^\ast(\MT)[q^{\pm}] \otimes_{\chi} \mathcal{L}^{-k}_{(\tau, 0)}. \]
\end{proof}

\section{Modularity of ${}^k \mathcal{G}(\M)$:} \label{modularity}

 \medskip
 \noindent
 Grojnowski has pointed out in \cite{G} that his sheaf is modular, i.e. it is equivariant under an action of $\mbox{SL}_2(\Z)$ that extends the action on $\mathcal{E}_{\Tr}$. This issue of modularity is somewhat subtle in our case. It will turn out that the untwisted sheaves $\mathcal{G}(\M)$ admit an action of $\mbox{SL}_2(\Z)$ compatible with the action of $\mathcal{N}$ whenever the $\Gr$-space $\M$ has a certain property which we shall make precise in the next paragraph. However, an interesting double cover of the group generated by $\mbox{SL}_2(\Z)$ and $\mathcal{N}$ will act on the line bundles $\mathcal{L}^{-k}$. By tensoring these actions together, we obtain an action of this double cover on the sheaf ${}^k \mathcal{G}(\M)$. 
 
 \medskip
 \noindent
 Let us fix a category of $\Gr$-spaces $\M$ that will be of interest to us. The property we will assume on our spaces $\M$ is familiar in the literature. It was first studied by Goresky, Kottwitz and MacPherson in \cite{GKM} and later explored by several authors. The context relevant to us has been studied in \cite{HHH} where the equivariant $\K$-theory of $\M$ is described combinatorially. We shall call these spaces GKM-spaces: 
 
 \begin{defn} \cite{HHH}
 Given a $\Gr$-space $\M$, consider $\M$ as a $\Tr$-space by restriction. We call $\M$ a GKM-space if $\M$ admits a $\Tr$-equivariant stratification: $\M = \coprod_{I} \mathcal{U}_I$, with a single $\Tr$-fixed point in the stratum $\mathcal{U}_I$ denoted by $F_I$. We make three assumptions on this stratification: 
 
 \begin{itemize} 
 \item We assume that the space obtained by collapsing the lower strata from the closure of a stratum: $\overline{\mathcal{U}}_I/\coprod_{J < I} \mathcal{U}_J$ is the compactfication of a $\Tr$-representation $V_I$ about the fixed point $F_I$.
 
 \item
  We assume that $V_I$ can be decomposed as: 
 \[ V_I = \bigoplus_{J < I} V_{IJ}, \]
 where $V_{IJ}$ is a sub representation such that its unit sphere maps to the fixed point $F_J$ under the attaching map to the lower strata. 
 
 \item
 Finally, we also assume that the $\Tr$-equivariant $\K$-theoretic Euler classes of $V_{IJ}$ are all mutually relatively prime. 
 \end{itemize}
 \end{defn}
  
  \medskip
  \noindent
  \begin{example}
 Given a parabolic subgroup $\Hg_I \subseteq \Gr$ for $I \subseteq \{1, \ldots, n\}$, the homogeneous space $\GHgI$ is an example of a GKM-space. 
  
  \end{example}
 \medskip
 \noindent
 The main theorem in \cite{HHH} states:
 
 \medskip
 \noindent
 \begin{thm} \cite{HHH} \label{HHH}
Given a GKM-space $\M$, the restriction map to the fixed points: 
 \[ \K^\ast_{\Tr}(\M) \longrightarrow \prod_{I} \Rep(\Tr), \]
 is injective, with the image given by elements $\prod_I \alpha_I$ so that $\alpha_I - \alpha_J$ is divisible by the Euler class of $V_{IJ}$ for all $J \leq I$. 
 \end{thm}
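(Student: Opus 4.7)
My plan is to argue by induction on the closed filtration $\M_{\leq I} := \bigcup_{J \leq I}\mathcal{U}_J$, adjoining one stratum at a time. The key inductive input is the cofiber sequence
\[ \M_{<I} \hookrightarrow \M_{\leq I} \twoheadrightarrow \M_{\leq I}/\M_{<I} \simeq S^{V_I} \]
guaranteed by the first two structural hypotheses on the stratification (the quotient being identified via excision with $\overline{\mathcal{U}}_I/(\overline{\mathcal{U}}_I \cap \M_{<I})$). Since $V_I$ is a complex $\Tr$-representation, its K-theory is concentrated in even degree, so the associated long exact sequence collapses to a short exact sequence
\[ 0 \longrightarrow \widetilde{\K}{}^{0}_{\Tr}(S^{V_I}) \longrightarrow \K^{0}_{\Tr}(\M_{\leq I}) \longrightarrow \K^{0}_{\Tr}(\M_{<I}) \longrightarrow 0. \]
By the Thom isomorphism $\widetilde{\K}{}^0_{\Tr}(S^{V_I}) \cong \Rep(\Tr)$; let $\Theta_I$ denote the image in $\K^0_{\Tr}(\M_{\leq I})$ of a generator. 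Then $\Theta_I$ restricts to the K-theoretic euler class $e(V_I) = \prod_{J<I} e(V_{IJ})$ at $F_I$ and to $0$ at every $F_J$ with $J<I$ (since those points map to the basepoint in $S^{V_I}$).

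For injectivity, a class $\xi \in \K^0_{\Tr}(\M_{\leq I})$ vanishing at every fixed point restricts to zero in $\K^0_{\Tr}(\M_{<I})$, hence vanishes there by induction, so $\xi = y \cdot \Theta_I$ for some $y \in \Rep(\Tr)$; evaluating at $F_I$ gives $y \cdot e(V_I) = 0$, forcing $y=0$ since $\Rep(\Tr)$ is an integral domain and $e(V_I) \neq 0$. For the image containment in the GKM subring, any class in $\K^0_{\Tr}(\M_{\leq I})$ differs from a chosen lift of an inductively-GKM element by a multiple of $\Theta_I$; the restriction data of $\Theta_I$ recorded above propagates and creates all required congruences modulo the $e(V_{IJ})$ at the newly adjoined vertex $F_I$.

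For surjectivity onto the GKM subring, given a tuple $(\alpha_K)_K$ satisfying the divisibility conditions, use induction to lift $(\alpha_K)_{K<I}$ to some $\tilde{\beta} \in \K^0_{\Tr}(\M_{\leq I})$. The defect $\delta := \alpha_I - \tilde{\beta}|_{F_I}$ lies in $\Rep(\Tr)$, and we must exhibit $y \in \Rep(\Tr)$ with $y \cdot e(V_I) = \delta$; then $\tilde{\beta} + y \cdot \Theta_I$ is the desired lift (its values at the $F_J$ for $J<I$ are unaffected, because $\Theta_I|_{F_J} = 0$). Using the hypothesis that $\alpha_I - \alpha_J$ is divisible by $e(V_{IJ})$ together with the inductively-verified equality $\tilde{\beta}|_{F_J} = \alpha_J$, one finds $\delta \in (e(V_{IJ}))$ for every $J < I$. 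This is precisely where the third GKM hypothesis enters: since the $e(V_{IJ})$ are pairwise coprime in the UFD $\Rep(\Tr)$, divisibility by each $e(V_{IJ})$ upgrades to divisibility by their product $e(V_I)$, producing the required $y$. The main obstacle is exactly this coprimality step; without it one would only obtain divisibility by the least common multiple of the $e(V_{IJ})$, which is insufficient to realize the defect as a single multiple of $\Theta_I$ along the Thom class. A minor technical care is needed to fix sign and ordering conventions so that the K-theoretic Thom restriction data at $F_I$ and $F_J$ literally matches the GKM divisibility statement.
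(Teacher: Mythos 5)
First, note that the paper itself does not prove this statement: it is quoted verbatim from \cite{HHH}, so there is no internal proof to compare against. Your overall strategy --- induction over the closed filtration $\M_{\leq I}$, the cofiber sequence onto $S^{V_I}$, the Thom class $\Theta_I$ with $\Theta_I|_{F_I}=e(V_I)=\prod_{J<I}e(V_{IJ})$ and $\Theta_I|_{F_J}=0$, and the use of pairwise coprimality in the UFD $\Rep(\Tr)$ to upgrade divisibility by each $e(V_{IJ})$ to divisibility by the product --- is exactly the standard GKM argument and is the route taken in \cite{HHH}. The injectivity step and the coprimality step are correct as written (modulo routine bookkeeping: you should carry $\K^1_{\Tr}(\M_{<I})=0$ through the induction to get left-exactness of your short exact sequence, and note $e(V_I)\neq 0$ because $V_I$ has no trivial summand, $F_I$ being the only fixed point of $\mathcal{U}_I$).

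There is, however, one genuine gap: the containment of the image in the GKM subring, i.e.\ the claim that \emph{every} $\xi\in\K^0_{\Tr}(\M_{\leq I})$ satisfies $e(V_{IJ})\mid \xi|_{F_I}-\xi|_{F_J}$ for $J<I$. Your argument --- ``any class differs from a chosen lift of an inductively-GKM element by a multiple of $\Theta_I$'' --- is circular, since $\xi$ is itself such a lift and nothing yet guarantees that \emph{any} lift satisfies the congruence at the new vertex $F_I$; moreover no choice of lift can fix this, because all lifts of a given class on $\M_{<I}$ have the same value at $F_I$ modulo $e(V_{IJ})$ (they differ by multiples of $\Theta_I|_{F_I}=e(V_I)\in(e(V_{IJ}))$). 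The same unproved congruence is silently used in your surjectivity step, where you need $\tilde{\beta}|_{F_I}-\tilde{\beta}|_{F_J}\in(e(V_{IJ}))$ to conclude $\delta\in(e(V_{IJ}))$. The missing ingredient is precisely the second structural hypothesis on the stratification: for each $J<I$ the closure of $V_{IJ}$ inside $\overline{\mathcal{U}}_I$ is an equivariant sphere $S^{V_{IJ}}$ mapping to $\M_{\leq I}$ with $0\mapsto F_I$ and $\infty\mapsto F_J$ (the unit sphere being collapsed to $F_J$ by the attaching map). Pulling $\xi$ back to this sphere, the reduced class $\xi|_{S^{V_{IJ}}}-\xi|_{F_J}\cdot 1$ lies in $\widetilde{\K}{}^0_{\Tr}(S^{V_{IJ}})\cong\Rep(\Tr)\cdot\Theta_{IJ}$ with $\Theta_{IJ}|_{0}=e(V_{IJ})$, which forces the congruence. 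Inserting this lemma repairs both the containment and the surjectivity steps, after which your proof is complete.
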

 
 \medskip
 \noindent
 We will use the above theorem to show that the sheaf $\mathcal{K}^\ast_{\T \ltimes \LbG}(\LM)$ is modular in a sense to be made precise below. 
 
 \medskip
 \noindent
\begin{defn} 
Consider the (right) action of $\mbox{SL}_2(\Z)$ on $(\pi_1(\Tr) \oplus \pi_1(\Tr))$ commuting with $\W(\Gr)$: 
\[ \left( \begin{array}{cc} a & b \\ c & d \end{array} \right ) \ast (\beta_1 \oplus \beta_2) = (a \beta_1 + c \beta_2, \, b \beta_1 + d \beta_2). \]
We may therefore define an extension of $\mathcal{N}$ by $\mbox{SL}_2(\Z)$ which we denote $\mathcal{N}_2(\Z)$: 
\[ \mathcal{N}_2(\Z) = (\pi_1(\Tr) \oplus \pi_1(\Tr)) \rtimes (\W(\Gr) \times \mbox{SL}_2(\Z)). \]
More precisely, the new relations in $\mathcal{N}_2(\Z)$ are of the form: 
\[ \left( \begin{array}{cc} a & b \\ c & d \end{array} \right )^{-1} (\beta_1 \oplus \beta_2) \left( \begin{array}{cc} a & b \\ c & d \end{array} \right ) =  (a \beta_1 + c \beta_2, \, b \beta_1 + d \beta_2). \]
We may extend the action of $\mathcal{N}$ on $\mathfrak{h} \times \Sigma_\C$ to an action of $\mathcal{N}_2(\Z)$ by defining a left $\mbox{SL}_2(\Z)$ action: 
\[  \left( \begin{array}{cc} a & b \\ c & d \end{array} \right ) \ast (\tau, h) = \left(  \frac{ a \tau + b}{c \tau + d} \, ,  \frac{h}{c \tau + d}  \right).  \]
\end{defn}

\medskip
\noindent
\begin{claim}
Given a GKM-space $\M$, the action of $\mathcal{N}_2(\Z)$ on $\mathfrak{h} \times \Sigma_\C$ induces an action on the stalks of the sheaf $\mathcal{K}^\ast_{\T \ltimes \LbG}(\LM)$. In particular, the stalks of the untwisted sheaf $\mathcal{G}(\M)$ admit an action of the group $\W(\Gr) \times \mbox{SL}_2(\Z)$. 
\end{claim}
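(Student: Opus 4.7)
The plan is to extend the already-present $\mathcal{N}$-action on $\mathcal{K}^\ast_{\T \ltimes \LbG}(\LM)$ to an $\mathcal{N}_2(\Z)$-action by producing $\mbox{SL}_2(\Z)$-equivariance on stalks via the GKM combinatorial model from Theorem \ref{HHH}, and then checking the semidirect product relations defining $\mathcal{N}_2(\Z)$.

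First I would compute the transformation rule on the coordinates $(h_1, h_2)$. Writing $h = -\tau h_1 + h_2$ and applying $g = \left( \begin{array}{cc} a & b \\ c & d \end{array} \right) \in \mbox{SL}_2(\Z)$, a direct calculation matching real and $\tau$-linear parts shows that $g \ast (\tau, h) = (\tau', h')$ with $h' = -\tau' h_1' + h_2'$, where $h_1' = d h_1 + c h_2$ and $h_2' = b h_1 + a h_2$. Since the integer matrix $\left( \begin{array}{cc} d & c \\ b & a \end{array} \right)$ has determinant $ad - bc = 1$, the closed subgroup $\langle \exp(2\pi i h_1), \exp(2\pi i h_2) \rangle \subseteq \Tr$ is invariant under $g$. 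By Theorem \ref{fixed} and Corollary \ref{walls}, the stalks at $(\tau, h)$ and at $g \ast (\tau, h)$ are therefore supported on the same subspace $\M^{\langle t_1, t_2 \rangle}$ of $\M$; what differs is the residual $\T$-action, which is dictated by $h_1$ versus $h_1'$, so the two stalks are not a priori equal as ${\bf T}$-equivariant objects.

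The main step, and main obstacle, is to bridge these two distinct $\T$-actions. Here the GKM hypothesis is essential: by Theorem \ref{HHH}, $\K^\ast_\Tr(\M^{\langle t_1, t_2 \rangle})$ injects into $\prod_{F_I \in \M^\Tr \cap \M^{\langle t_1, t_2 \rangle}} \Rep(\Tr)$, with image cut out by divisibility of differences $\alpha_I - \alpha_J$ by the Euler classes $e(V_{IJ})$. The $\Tr$-fixed points in $\M^{\langle t_1, t_2 \rangle}$ and the Euler classes $e(V_{IJ})$ are intrinsic to the subgroup $\langle t_1, t_2 \rangle$ (and involve only $\Tr$-weights), so the combinatorial data is identical at $(\tau, h)$ and $g \ast (\tau, h)$. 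The $\T$-twist by $h_1$ enters only through the specialization $\chi$ to the local ring $\mathcal{O}_{(\tau, h)}$, where the pullback by $g^{-1}$ provides the matching isomorphism. One then defines the $g$-action on the stalk by acting as the identity on each combinatorial $\Rep(\Tr)$-factor and as $g^\ast$ on the $\mathcal{O}$-coefficient; divisibility is preserved because $g$ acts on weights by a lattice automorphism, sending $e^\alpha - 1$ to $e^{g \ast \alpha} - 1$.

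Finally, the composition of two $g$-actions coincides with the action of the product by functoriality of pullback, and the semidirect relations in $\mathcal{N}_2(\Z)$ reduce, via the combinatorial model, to the corresponding identities on the base $\mathfrak{h} \times \Sigma_\C$, which hold by the very definition of $\mathcal{N}_2(\Z)$. The essential difficulty, and the reason the GKM hypothesis cannot be dropped, is precisely the one noted above: for general $\Gr$-spaces the $\T$-equivariance on $\M^{\langle t_1, t_2 \rangle}$ genuinely depends on $h_1$, and only the rigidity of the GKM combinatorics permits identification of the $\K$-theory across different $h_1$.
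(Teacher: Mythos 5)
Your proposal follows the same route as the paper: compute the induced transformation $(h_1,h_2)\mapsto (dh_1+ch_2,\,bh_1+ah_2)$, observe that the subgroup $\langle \exp(2\pi i h_1),\exp(2\pi i h_2)\rangle$ of $\Tr$ is unchanged because the matrix lies in $\mbox{SL}_2(\Z)$, identify the residual $\T$-twist by $h_1$ versus $\hat h_1$ as the real obstacle, and resolve it by embedding both stalks into $\prod_I \mathcal{O}(\mathfrak{h}\times\Sigma_\C)_{(\tau,h)}$ via the GKM restriction of Theorem \ref{HHH} and showing the two images coincide. All of that matches the paper's proof.

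The one step where your justification does not work as stated is the claim that ``divisibility is preserved because $g$ acts on weights by a lattice automorphism, sending $e^\alpha-1$ to $e^{g\ast\alpha}-1$.'' The Euler classes $e(V_{IJ})$ are the \emph{same} elements of $\Rep(\Tr)$ on both sides --- $\mbox{Z}(\tau,h)$ and $\mbox{Z}(A(\tau,h))$ are canonically the same $\Tr$-space, and $\mbox{SL}_2(\Z)$ does not act on $\Tr$-weights at all. What differs is the two specialization homomorphisms $\chi_{(\tau,h)}$ and $A^\ast\chi_{A(\tau,h)}$ from $\Rep(\Tr)$ into the local ring $\mathcal{O}(\mathfrak{h}\times\Sigma_\C)_{(\tau,h)}$, which carry the $h_1$- versus $\hat h_1$-twist. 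So the statement you actually need is that $A^\ast\chi_{A(\tau,h)}^\ast e(V_{IJ})$ and $\chi_{(\tau,h)}^\ast e(V_{IJ})$ generate the same ideal in $\mathcal{O}(\mathfrak{h}\times\Sigma_\C)_{(\tau,h)}$, i.e.\ differ by a unit; only then does ``identity on each $\Rep(\Tr)$-factor, pullback on the $\mathcal{O}$-coefficient'' carry the divisibility-cut-out subring isomorphically onto the divisibility-cut-out subring. The paper supplies this: both specializations send the base point $(\tau,h)$ to the identity of $\Tr_\C$, so both germs vanish at $(\tau,h)$ to the same order $\dim V_{IJ}$, and their ratio is an invertible element of the local ring. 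Without an argument of this kind your map is not shown to land in, let alone surject onto, the stalk at $(\tau,h)$, so you should add this unit-ratio verification (and note that it is exactly here that the normalization of $\chi_{(\tau,h)}$ at the base point is used).
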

\begin{proof}
Recall from \ref{walls} that the stalks $\mathcal{K}_{\T \ltimes \LbG}(\LM)_{(\tau, h)}$ are localized on the $\T \times \Tr$-space: 
 \[ \mbox{Z}(\tau, h) :=  \{ \gamma \in \LM \, | \, \gamma(e^{2 \pi ix}) = \mbox{exp}(2 \pi i x h_1) \, m, \quad   m \in \M^{\langle exp(2 \pi i h_1), \, exp(2 \pi i h_2) \rangle} \}. \]
 The action of the group $\T \times \Tr$ on $\mbox{Z}(\tau,h)$ factors through the map: 
 \[ \rho(\tau,h) : \T \times \Tr \longrightarrow \Tr, \quad \quad (e^{2 \pi i x},s) \longmapsto e^{2\pi i (h_1 x-h_2)} s \]
 where we recall from \ref{fixed} that $h_1, h_2$ are defined uniquely by the equation: $h = -h_1 \tau + h_2$. Now the stalks of $\mathcal{K}^\ast_{\T \ltimes \LbG}(\LM)$ can be described as: 
\[ \mathcal{K}^\ast_{\T \ltimes \LbG}(\LM)_{(\tau, h)} =  \K^\ast_{\Tr}(\mbox{Z}(\tau, h))\otimes_{\chi_{(\tau,h)}} \mathcal{O}(\mathfrak{h} \times \Sigma_\C)_{(\tau, h)},\]
where the map $\chi_{(\tau,h)}$ is is induced by the composite of $\chi$ with the map $\rho(\tau,h)$ above: 
\[ \chi_{(\tau,h)} : \mathfrak{h} \times \Sigma_\C \longrightarrow \Tr_\C, \quad \quad (z,s) \longmapsto \mbox{exp}(2\pi i (h_z + s)), \]
where $h_z = zh_1-h_2$. Notice that the map $\chi_{(\tau,h)}$ maps the element $(\tau,h)$ to the trivial element in $\T_\C$. 

\medskip
\noindent
The action of a matrix in $A \in \mbox{SL}_2(\Z)$ on $\mathfrak{h} \times \Sigma_\C$ sends the pair $(h_1, h_2)$ to the pair $(\hat{h}_1, \hat{h}_2)$, where the pairs are related by: 
\[ a \hat{h}_1 - c \hat{h}_2 = h_1, \quad d \hat{h}_2 -b \hat{h}_1  = h_2, \quad A =  \left( \begin{array}{cc} a & b \\ c & d \end{array} \right ). \]
Observe that the group generated by $\mbox{exp}(2 \pi i h_0)$ and $\mbox{exp}(2 \pi i h_1)$ remains unchanged under the action of $A$. Hence $\mbox{Z}(A(\tau,h))$ is canonically equivalent to $\mbox{Z}(\tau,h)$ as a $\Tr$-space. This defines the operator induced by the $\mbox{SL}_2(\Z)$-action on $\mathfrak{h} \times \Sigma_\C$:
\[ A^\ast : \mathcal{O}(\mathfrak{h} \times \Sigma_\C)_{A(\tau,h)} \longrightarrow \K^\ast_{\Tr}(\mbox{Z}(\tau, h))\otimes_{A^\ast \chi_{A(\tau,h)}} \mathcal{O}(\mathfrak{h} \times \Sigma_\C)_{(\tau, h)}, \quad A^\ast \psi(z,s) = \psi(A(z,h)). \]
Next, we show that the right hand side is canonically isomorphic to $\mathcal{O}(\mathfrak{h} \times \Sigma_\C)_{(\tau,h)}$. For this we invoke theorem \ref{HHH}. Since $\M$ is a GKM-space, it follows that $\mbox{Z}(\tau,h)$ is also a GKM-space with the same set of fixed points. Consider the injective restriction map: 
\[  \K^\ast_{\Tr}(\mbox{Z}(A(\tau, h)))\otimes_{A^\ast \chi_{A(\tau,h)}} \mathcal{O}(\mathfrak{h} \times \Sigma_\C)_{(\tau, h)} \longrightarrow \prod_I \Rep(\Tr) \otimes _{A^\ast \chi_{A(\tau,h)}} \mathcal{O}(\mathfrak{h} \times \Sigma_\C)_{(\tau, h)} = \prod_I \mathcal{O}(\mathfrak{h} \times \Sigma_\C)_{(\tau, h)}. \]
Notice that a similar map also exists for  $\K^\ast_{\Tr}(\mbox{Z}(\tau, h))\otimes_{\chi_{(\tau,h)}} \mathcal{O}(\mathfrak{h} \times \Sigma_\C)_{(\tau, h)} $. We proceed to show that the ring: $\K^\ast_{\Tr}(\mbox{Z}(A(\tau, h)))\otimes_{A^\ast \chi_{A(\tau,h)}} \mathcal{O}(\mathfrak{h} \times \Sigma_\C)_{(\tau, h)}$ is canonically isomorphic to the ring $\K^\ast_{\Tr}(\mbox{Z}(\tau, h))\otimes_{\chi_{(\tau,h)}} \mathcal{O}(\mathfrak{h} \times \Sigma_\C)_{(\tau, h)}$, with both objects seen inside $\prod_I \mathcal{O}(\mathfrak{h} \times \Sigma_\C)_{(\tau, h)}$. Consider an element:
 \[ \alpha := \prod_I \alpha_I  \in \prod_I \mathcal{O}(\mathfrak{h} \times \Sigma_\C)_{(\tau, h)}, \quad A^\ast \chi_{A(\tau,h)}^\ast e(V_{IJ}) | (\alpha_I - \alpha_I), \]
 where $e(V_{IJ})$ denotes the equivariant $K$-theoretic Euler class of $V_{IJ}$. By theorem \ref{HHH}, $\alpha$ is precisely an element in the image of $\K^\ast_{\Tr}(\mbox{Z}(A(\tau, h)))\otimes_{A^\ast \chi_{A(\tau,h)}} \mathcal{O}(\mathfrak{h} \times \Sigma_\C)_{(\tau, h)}$. To show that $\alpha$ is also in the image of $\K^\ast_{\Tr}(\mbox{Z}(\tau, h))\otimes_{\chi_{(\tau,h)}} \mathcal{O}(\mathfrak{h} \times \Sigma_\C)_{(\tau, h)}$, it is sufficient to show that $A^\ast \chi_{A(\tau,h)}^\ast e(V_{IJ})$ is a unit times $\chi_{(\tau,h)}^\ast e(V_{IJ})$. For this, recall that $\chi_{(\tau,h)}$ sends the point $(\tau,h)$ to the unit in $\Tr_\C$. It follows from this that the virtual characters $A^\ast \chi_{A(\tau,h)}^\ast e(V_{IJ})$ and $\chi_{(\tau,h)}^\ast e(V_{IJ})$ have a  zero of the same order (given by the dimension of $V_{IJ}$) at the point $(\tau,h)$. Hence the ratio of $A^\ast \chi_{A(\tau,h)}^\ast e(V_{IJ})$ and $\chi_{(\tau,h)}^\ast e(V_{IJ})$ is well-defined, and represents an invertible element in $\mathcal{O}(\mathfrak{h} \times \Sigma_\C)_{(\tau, h)}$, which is what we wanted to show. 
 
 \medskip
 \noindent
 The above identification yields a canonical map: 
\[ A^\ast :  \mathcal{O}(\mathfrak{h} \times \Sigma_\C)_{A(\tau,h)} \longrightarrow  \K^\ast_{\Tr}(\mbox{Z}(\tau, h))\otimes_{\chi_{(\tau,h)}} \mathcal{O}(\mathfrak{h} \times \Sigma_\C)_{(\tau, h)} = \mathcal{O}(\mathfrak{h} \times \Sigma_\C)_{(\tau,h)} . \]
We leave it to the reader to check that this map of stalks extends to an action of $\mathcal{N}_2(\Z)$: 
\[ \mathcal{K}^\ast_{\T \ltimes \LbG}(\LM)_{A(\tau, h)} \longrightarrow  \mathcal{K}^\ast_{\T \ltimes \LbG}(\LM)_{(\tau, h)}. \]
\end{proof}

\bigskip
\noindent
It remains to demonstrate modularity of ${}^k \mathcal{G}(\M)$. For this, we need to establish a modular action on the line bundles $\mathcal{L}^{-k}$. As mentioned earlier, this turns out to be a subtle matter. We begin by constructing a double cover of $\mathcal{N}_2(\Z)$ using a cocycle $\eta$ defined below. Recall the positive definite form $\langle \,, \rangle$ on $\pi_1(\Tr)$ induced by the Cartan-Killing form. Consider the $\W(\Gr)$-invariant quadratic form mod two: 
\[ \mu : \pi_1(\Tr) \oplus \pi_1(\Tr) \longrightarrow \Z/2, \quad \quad (\alpha, \beta) \longmapsto \langle \alpha, \beta \rangle \mod 2. \]
Given $A \in \mbox{SL}_2(\Z)$, and $(\beta_1 \oplus \beta_2) \in \pi_1(\Tr) \oplus \pi_1(\Tr)$, we also define the $\Z/2$ valued function:
\[ \eta(\beta_1 \oplus \beta_2, A) := \mu(A \ast (\beta_1 \oplus \beta_2))-\mu(\beta_1 \oplus \beta_2). \]

\bigskip
\begin{remark}
One may check that $\eta$ is trivial if $\pi_1(\Tr)$ is an even lattice i.e. $\langle \beta, \beta \rangle \in 2 \Z$ for all $\beta \in \pi_1(\Tr)$. This is the case if $\Gr$ is simply laced. 
\end{remark}

\medskip
\noindent
\begin{defn}
Define the double cover $\mathcal{M}_2(\Z)$ of $\mathcal{N}_2(\Z)$ by extending the action of $\mbox{SL}_2(\Z)$ on the lattice $(\pi_1(\Tr) \oplus \pi_1(\Tr))$ using $\eta$ as a cocycle: 
\[ \mathcal{M}_2(\Z) = (\Z/2 \, \oplus \, \pi_1(\Tr) \oplus \pi_1(\Tr)) \rtimes (\W(\Gr) \times \mbox{SL}_2(\Z)), \]
with $\Z/2$ being central, and relations: 
 \[ A^{-1} (\beta_1 \oplus \beta_2) \, A =  \eta(\beta_1 \oplus \beta_2, A) \, \oplus \, A \ast (\beta_1 \oplus \beta_2). \]
 Note that this central extension is canonically split over $\mathcal{N}$ and $\mbox{SL}_2(\Z)$.
 \end{defn}
 
 \medskip
 \noindent
We may now extend the action of $\mathcal{N}$ on the line bundle $\mathcal{L}$, to an action of $\mathcal{M}_2(\Z)$ that lifts the action of $\mathcal{N}_2(\Z)$ on $\mathfrak{h} \times \Sigma_\C$ as follows:  Given $A \in \mbox{SL}_2(\Z)$, we define a left action of $A$ on the line bundle $\mathcal{L} = \mathfrak{h} \times \Sigma_\C \times \C$ by:
\[  \left( \begin{array}{cc} a & b \\ c & d \end{array} \right ) \ast (\tau, h, z) = (\frac{a \tau + b}{c \tau + d} \, , \frac{h}{c \tau + d}, z \, \mbox{exp}(-\frac{\pi i \, c }{c \tau + d} \langle h, h \rangle),\]
where $\langle h, h \rangle$ denotes the $\C$-linear extension of the quadratic form on $\pi_1(\Tr)$. The generator of the central $\Z/2$ is defined to act by multiplication with $-1$ on the factor $\C$. We leave it to the reader to check that this defines an action of $\mathcal{M}_2(\Z)$ on $\mathcal{L}$.

\medskip
\noindent
As an immediate consequence of these observations, we have: 

\medskip
\noindent
\begin{thm} \label{modular}
Given a GKM-space $\M$, the global sections of the sheaf ${}^k \mathcal{K}^\ast_{\T \ltimes \LbG}(\LM)$ admit an action of the group $\mathcal{M}_2(\Z)$. In the case of an even lattice $\pi_1(\Tr)$, or even level $k$, global sections of ${}^k \mathcal{G}(\M)$ admit an action of the group $\W(\Gr) \times \mbox{SL}_2(\Z)$. 
\end{thm}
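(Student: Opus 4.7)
The plan is to assemble the $\mathcal{M}_2(\Z)$-action by combining the $\mathcal{N}_2(\Z)$-action on the untwisted sheaf $\mathcal{K}^\ast_{\T \ltimes \LbG}(\LM)$ established in the preceding claim with the $\mathcal{M}_2(\Z)$-action on $\mathcal{L}^{-k}$ just constructed. First I would promote the stalkwise $\mathcal{N}_2(\Z)$-action to a sheaf-level action: the canonical stalk isomorphisms $A^\ast$ built in the previous proof depend only on ratios of equivariant euler classes on the GKM fixed point set, and these vary holomorphically in $(\tau, h)$, so the $A^\ast$ glue to genuine morphisms of sheaves over the action groupoid. Equivalently, one may check the gluing on the level of finite $\T \times \Tr$-skeleta of $\LM$, where everything reduces to manipulations of rational functions in exponential variables.

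Next, using the rank-one freeness isomorphism ${}^k \mathcal{K}^\ast_{\T \ltimes \LbG}(\LM) \cong \mathcal{K}^\ast_{\T \ltimes \LbG}(\LM) \otimes_{\mathcal{O}(\mathfrak{h} \times \Sigma_\C)} \mathcal{L}^{-k}$ from the earlier theorem, I would pull back the $\mathcal{N}_2(\Z)$-action on the left tensor factor along the canonical splitting $\mathcal{N}_2(\Z) \hookrightarrow \mathcal{M}_2(\Z)$, and tensor with the $\mathcal{M}_2(\Z)$-action on $\mathcal{L}^{-k}$. Both actions lift the same $\mathcal{N}_2(\Z)$-action on the base $\mathfrak{h} \times \Sigma_\C$, so the two $\mathcal{O}(\mathfrak{h} \times \Sigma_\C)$-semilinear structures are compatible, and the tensor product is a well-defined $\mathcal{M}_2(\Z)$-action. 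Taking global sections yields the first statement.

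For descent to ${}^k \mathcal{G}(\M)$, recall ${}^k \mathcal{G}(\M) = \{\zeta_\ast \, {}^k \mathcal{K}^\ast_{\T \ltimes \LbG}(\LM)\}^{\pi_1(\Tr) \oplus \pi_1(\Tr)}$, so the residual symmetry group is whatever remains after quotienting the lattice out of the normalizer. If $\pi_1(\Tr)$ is an even lattice, the cocycle $\eta$ is identically zero, so $\mathcal{M}_2(\Z) \cong \Z/2 \times \mathcal{N}_2(\Z)$ is split, and the quotient $\mathcal{N}_2(\Z)/(\pi_1(\Tr) \oplus \pi_1(\Tr)) = \W(\Gr) \times \mbox{SL}_2(\Z)$ acts on ${}^k \mathcal{G}(\M)$. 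If instead $k$ is even, then the central generator of $\Z/2$ acts on $\mathcal{L}^{-k}$ by $(-1)^k = 1$, so the full $\mathcal{M}_2(\Z)$-action factors through $\mathcal{N}_2(\Z)$, and the same quotient produces the desired $\W(\Gr) \times \mbox{SL}_2(\Z)$-action.

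The main obstacle is verifying that the tensor-product action is genuinely group-theoretic with respect to the cocycle $\eta$ rather than some different $\Z/2$-cocycle: one must check that the $\mbox{SL}_2(\Z)$-translation ambiguity in $\mathcal{N}_2(\Z)$ combines with the modular automorphy factor $\exp(-\pi i c \langle h,h \rangle/(c\tau+d))$ on $\mathcal{L}^{-k}$ to give exactly the cocycle $\eta$ and no more. This is where the interaction of the quadratic form on $\pi_1(\Tr)$ with the $\mbox{SL}_2(\Z)$-action on lattice translations enters and forces the double cover in the non-even-lattice case; for even lattices or even levels the ambiguity disappears for the reasons above.
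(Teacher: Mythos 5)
Your proposal is correct and coincides with the paper's (entirely implicit) argument: Theorem \ref{modular} is stated there as an immediate consequence of the stalkwise $\mathcal{N}_2(\Z)$-action on the untwisted sheaf $\mathcal{K}^\ast_{\T \ltimes \LbG}(\LM)$, the $\mathcal{M}_2(\Z)$-action on $\mathcal{L}^{-k}$, and the rank-one freeness isomorphism, with the two descent cases ($\eta \equiv 0$ for even lattices, $(-1)^k = 1$ for even level) handled exactly as you describe. The only slip is your appeal to a ``canonical splitting $\mathcal{N}_2(\Z) \hookrightarrow \mathcal{M}_2(\Z)$'' --- no such splitting exists unless $\eta$ is a coboundary (the paper only splits the extension over $\mathcal{N}$ and over $\mbox{SL}_2(\Z)$ separately); you should instead inflate the $\mathcal{N}_2(\Z)$-action on the untwisted factor along the central quotient $\mathcal{M}_2(\Z) \twoheadrightarrow \mathcal{N}_2(\Z)$, after which your tensor-product construction goes through unchanged.
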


\section{Some comments on our construction:} \label{remarks}

 \medskip
 \noindent
In the previous section, we demonstrated modularity for the sheaf ${}^k \mathcal{G}(\M)$ in the case of GKM-spaces $\M$. The proof of this fact used the restriction to $\Tr$-fixed points of $\M$. That proof does not extend to arbitrary $\Gr$-spaces $\M$ in a straightforward manner, which is in contrast to the construction made by Grojnowski, where no special property is assumed for $\M$. This may suggest that either our construction is different from that of Grojnowski, or perhaps that a different proof is needed to show the equivalence of the two constructions. One may speculate that the failure of modularity for arbitrary $\M$ (if that is indeed the case) could be seen as a statement that there is no viable CFT that couples a sigma model on $\M$ with a rational CFT with the required gauge symmetries. The question of modularity remains open. 

\medskip
\noindent
Our sheaves ${}^k \mathcal{G}(\M)$ are $\Z/2$-graded since stalks are constructed from dominant $\K$-theory. In particular, setting $\M$ to be a point, the sheaf ${}^k \mathcal{G}(pt)$ is simply the vector-space of holomorphic sections of the line bundle $\mathcal{L}^{-k}$ graded in even parity. It is natural to extend the grading of the sheaf ${}^k \mathcal{G}(\M)$ to the integers by defining the sheaf in degree $\ast + 2n$ to be the sheaf ${}^k \mathcal{G}^\ast(\M)$ twisted with $\omega ^{\otimes n}$, where $\omega$ denotes the pullback to $\mathcal{E}_{\Tr}$ of the $\mbox{SL}_2(\Z)$-invariant line bundle of ``invariant differentials" on $\mathfrak{h}$. Notice that after we grade our sheaves over the integers, the $\mathcal{M}_2(\Z)$-invariant global sections of ${}^k \mathcal{G}^\ast(pt)$ can be identified with $\W$-invariant Jacobi forms of several variables.

\bigskip

\pagestyle{empty}
\bibliographystyle{amsplain}
%\bibliography{}
\providecommand{\bysame}{\leavevmode\hbox
to3em{\hrulefill}\thinspace}

\end{document}